\theoremstyle{plain}
\newtheorem{definition}{\bf Definition}
\newtheorem{example}{\bf Example}
\newtheorem*{example.a}{\bf Example A.}
\newtheorem{lemma}{\bf Lemma}
\newtheorem{proposition}{\bf Proposition}
\newtheorem{remark}{Remark}
\newtheorem{theorem}{\bf Theorem}
\newtheorem*{theorem.a}{\bf Theorem A}
\numberwithin{equation}{section}
\newcommand\tr{\mathrm{tr}}
\newcommand\dv{\mathrm{div}}
\begin{document}

\title[A characterization of quasi-Einstein metrics]{A characterization of quasi-Einstein metrics}

\author{Antonio Airton Freitas Filho}
\address{Departamento de Matem\'atica, Universidade Federal do Amazonas, Av. Rodrigo Oct\'avio, 6200, 69080-900 Manaus, Amazonas, Brazil}
\email{aafreitasfilho@ufam.edu.br}
\urladdr{http://www.ufam.edu.br}


\keywords{Ricci-Hessian type manifold; modified Ricci soliton}

\subjclass[2020]{Primary 53C25, 53E20, 53C24; Secondary 53C21}

\date{October 14, 2025}

\begin{abstract}
We study the modified Ricci solitons as a new class of Einstein type metrics that contains both Ricci solitons and $n$-quasi-Einstein metrics. This class is closely related to the construction of the Ricci solitons that are realised as warped products. A modified Ricci soliton appears as part of a special solution of the modified Ricci-harmonic flow, which result a new characterization of $n$-quasi-Einstein metrics. We also study the modified Ricci almost solitons. In the spirit of the Lichnerowicz and Obata first eigenvalue theorems, we prove that in the class of compact Riemannian manifolds with constant scalar curvature the standard sphere with a structure of gradient modified Ricci almost soliton is rigid under some specific geometric conditions. Moreover, we display an example of modified Ricci-harmonic soliton.

\end{abstract}
\maketitle


\section{Introduction}

It this paper, we give an alternative characterization of the $n$--quasi-Einstein manifolds. These latter are complete Riemannian manifolds whose corresponding modified Bakry-\'Emery Ricci tensor is a constant multiple of the metric tensor. They are base of Einstein warped products with $n$--dimensional Einstein fibers \cite{besse}. Here, we discuss an interesting case of Einstein type manifolds which stems from a study of Ricci soliton warped product. For instance, Bryant constructed a steady Ricci soliton as the warped product $(0,+\infty) \times_f \mathbb{S}^{n}$, $n>1$, with a radial warping function $f$. Bryant did not himself publish this result, but it can be checked in \cite{Chow}. Pina and Sousa~\cite{romildo} constructed steady gradient Ricci solitons warped product with base conformal to an $m$--dimensional pseudo Euclidean space invariant under the action of an $(m-1)$--dimensional translation group ($m\geqslant3$). Noncompact gradient steady Ricci solitons were constructed by Ivey~\cite{Ivey2} and by Dancer and Wang~\cite{DW}, which were achieved by the use of doubly and multiple warped product constructions. Gastel and Kronz~\cite{GK} constructed a two-parameter family (doubly warped product) of gradient expanding Ricci solitons on $\mathbb{R}^{m}\times F^n$, where $F^{n}$ $(n\geqslant2)$ is an Einstein manifold with positive scalar curvature. Complementing these works, Feitosa et al.~\cite{RSWP} established the necessary and sufficient conditions for constructing a gradient Ricci soliton warped product. As an application, they presented a example of expanding Ricci soliton warped product having as fiber an Einstein manifold with non-positive scalar curvature. We note that this example is rigid in the sense of Petersen and Wylie~\cite{PW-2009}. Motivated by these works, we consider the following class of metrics.


A modified Ricci soliton is a complete Riemannian manifold $(M^m,g)$ endowed with a vector field $X:M \to TM$ and a $1$--form $\omega:M \to TM^*$ satisfying
\begin{equation}\label{eq.tSR}
Rc_{g}+\dfrac{1}{2}\mathcal{L}_{X}g=\lambda g+\theta\omega\otimes\omega,
\end{equation}
for some two real numbers $\lambda$ and $\theta>0.$ Here, $\mathcal{L}_{X}g$ stands for the Lie derivative of $g$ with respect to $X$, and $Rc_{g}$ for the Ricci tensor of $g$.

For simplicity, we say that $(M^m,g,X,\omega)$ is a modified Ricci soliton. The condition $\theta>0$ is just a motivational issue, but there is no impediment to consider $\theta\leqslant0$. When $X=\nabla\eta$ and $\omega=d\xi$ for some smooth functions $\eta,\,\xi$ on $M$, then $(M^m,g,\nabla\eta,d\xi)$ is a gradient modified Ricci soliton. Thus, Eq.~\eqref{eq.tSR} becomes
\begin{equation}\label{eq.tSRgr}
Rc_{g}+\nabla d\eta=\lambda g+\theta d\xi\otimes d\xi,
\end{equation}
where $\nabla d\eta$ is the hessian of $\eta$ in the metric $g.$

Gradient modified Ricci solitons appeared previously in the work of Cho and Kimura~\cite{cho-kimura} in the studying of real hypersurfaces in nonflat complex space forms. In this case, the authors used the nomenclature of $\theta$--Ricci soliton.

When $\lambda$ is a nonconstant smooth function on $M$, we refer to $(M^m,g,X,\omega)$ as modified Ricci almost soliton.

In this paper, we prove that in the class of compact Riemannian manifolds with constant scalar curvature, the standard sphere with a structure of gradient modified Ricci almost soliton is rigid provided some specific geometric conditions. This is the subject of our main result which is in the spirit of the Lichnerowicz and Obata first eigenvalue theorems and is described as follows.

\begin{theorem}\label{rig.cpct}
Let $(M^m,g,X,d\xi)$, $m\geqslant2,$ be a compact modified Ricci almost soliton with constant scalar curvature $S$. If $\xi$ is an eigenfunction of Laplacian, then its correspondent eigenvalue $\alpha$ satisfies
\begin{equation*}
\alpha\geqslant\dfrac{S}{m-1}.
\end{equation*}
The equality occurs only if $(M^m,g)$ is isometric to a standard sphere $\mathbb{S}^{m}(r)$, where $r=\sqrt{m(m-1)/S}$. Moreover, up to rescaling and a constant, the structure is gradient with functions given in Proposition \ref{ex.p.Mc}.
\end{theorem}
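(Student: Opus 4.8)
The plan is to run a Lichnerowicz-type Bochner argument on the eigenfunction $\xi$ and invoke Obata's rigidity theorem in the equality case. Normalize the eigenvalue equation (with the sign convention $\Delta=\dv\nabla$) as $\Delta\xi=-\alpha\xi$, $\alpha>0$; since $\xi$ is a nonconstant eigenfunction on a closed manifold, $\int_M\xi\,dV=0$ and $\int_M|\nabla\xi|^2\,dV=\alpha\int_M\xi^2\,dV$. Integrating the Bochner formula $\tfrac12\Delta|\nabla\xi|^2=|\nabla^2\xi|^2+\langle\nabla\xi,\nabla\Delta\xi\rangle+Ric(\nabla\xi,\nabla\xi)$ over $M$ yields
\begin{equation*}
\int_M Ric(\nabla\xi,\nabla\xi)\,dV=\alpha\int_M|\nabla\xi|^2\,dV-\int_M|\nabla^2\xi|^2\,dV,
\end{equation*}
which is exact and independent of the soliton structure. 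The Cauchy--Schwarz (trace) inequality $|\nabla^2\xi|^2\geq\tfrac{(\Delta\xi)^2}{m}=\tfrac{\alpha^2\xi^2}{m}$ gives $\int_M|\nabla^2\xi|^2\,dV\geq\tfrac{\alpha}{m}\int_M|\nabla\xi|^2\,dV$, with equality exactly when the Obata tensor $\nabla^2\xi+\tfrac{\alpha}{m}\xi g$ vanishes.

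Next I would feed in the soliton structure. Contracting Eq.~\eqref{eq.tSR} twice with $\nabla\xi$ gives $Ric(\nabla\xi,\nabla\xi)=\lambda|\nabla\xi|^2+\theta|\nabla\xi|^4-\tfrac12(\mathcal{L}_X g)(\nabla\xi,\nabla\xi)$, while its trace reads $S+\dv X=m\lambda+\theta|\nabla\xi|^2$, which I use to eliminate $\lambda$. Substituting into the integrated Bochner identity produces, after rearrangement,
\begin{equation*}
\frac{(m-1)\alpha-S}{m}\int_M|\nabla\xi|^2\,dV=\int_M\Big(|\nabla^2\xi|^2-\tfrac{\alpha}{m}|\nabla\xi|^2\Big)\,dV+J,
\end{equation*}
where the first integrand is nonnegative by the previous step and
\begin{equation*}
J:=\frac{(m-1)\theta}{m}\int_M|\nabla\xi|^4\,dV+\frac1m\int_M(\dv X)|\nabla\xi|^2\,dV-\frac12\int_M(\mathcal{L}_X g)(\nabla\xi,\nabla\xi)\,dV
\end{equation*}
collects the genuinely nonlinear term (nonnegative since $\theta>0$) together with the full vector-field contribution. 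Thus the bound $\alpha\geq\tfrac{S}{m-1}$ follows at once from the single inequality $J\geq0$.

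The main obstacle is establishing $J\geq0$ for a general, non-gradient field $X$, and this is where the constant scalar curvature is indispensable. I would use the contracted second Bianchi identity $\dv(Ric)=\tfrac12\nabla S=0$, which yields the clean auxiliary fact $\int_M\langle Ric,\nabla^2\xi\rangle\,dV=0$, and then take the divergence of Eq.~\eqref{eq.tSR}, combining it with the trace relation and the eigenvalue equation. Repeated integration by parts, using $\int_M(\dv X)|\nabla\xi|^2\,dV=-2\int_M\nabla^2\xi(X,\nabla\xi)\,dV$ and the Weitzenböck identity $\Delta(d\xi)=d(\Delta\xi)+Ric(\nabla\xi,\cdot)$ to rewrite $\int_M(\mathcal{L}_X g,\nabla^2\xi)$, is expected to exhibit $J$ as a sum of a square term and boundaryless divergences, forcing $J\geq0$. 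This cancellation, driven simultaneously by $S=\mathrm{const}$ and $\theta>0$, is the technical heart of the argument; I anticipate it to be the step requiring the most care, since the non-gradient terms do not individually carry a sign.

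Finally, the rigidity. If $\alpha=\tfrac{S}{m-1}$ then the left-hand side above vanishes, and since both summands on the right are nonnegative we must have equality in Cauchy--Schwarz, i.e. $\nabla^2\xi=-\tfrac{\alpha}{m}\xi g$ with $\tfrac{\alpha}{m}=\tfrac{S}{m(m-1)}>0$ constant, together with $J=0$. As $\xi$ is nonconstant, Obata's theorem identifies $(M^m,g)$ with the round sphere $\mathbb{S}^m(r)$ of radius $r=\sqrt{m/\alpha}=\sqrt{m(m-1)/S}$, on which $Ric=\tfrac{S}{m}g$. Substituting this Einstein condition and the Obata equation back into Eq.~\eqref{eq.tSR}, and using $J=0$ to pin down $X$, I would solve explicitly for $X$ and $\lambda$; one then checks that, after rescaling and adding a constant, $X$ is a gradient and the resulting functions coincide with those of Proposition~\ref{ex.p.Mc}, which completes the proof.
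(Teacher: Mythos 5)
Your reduction is correct as far as it goes: with $\Delta\xi=-\alpha\xi$, the integrated Bochner formula together with the $\nabla\xi\otimes\nabla\xi$--contraction and the trace of Eq.~\eqref{eq.tSR} do give
\begin{equation*}
\frac{(m-1)\alpha-S}{m}\int_M\|\nabla\xi\|^2\,dM=\int_M\|\mathring{\nabla d\xi}\|^2\,dM+J,
\end{equation*}
and in fact your $J$ is nothing but $\int_M\mathring{Ric}(\nabla\xi,\nabla\xi)\,dM$ (subtract $\tfrac{S}{m}\|\nabla\xi\|^2$ from your expression for $Ric(\nabla\xi,\nabla\xi)$ and integrate). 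The genuine gap is that the proof stops exactly at the step carrying all of the content: $J\geqslant0$ is only ``anticipated''. The integrand $\mathring{Ric}(\nabla\xi,\nabla\xi)$ has no pointwise sign, and the toolkit you propose (divergence of Eq.~\eqref{eq.tSR}, Bianchi, Weitzenb\"ock, unstructured integration by parts) will keep producing $X$--terms that individually carry no sign, as you yourself observe. The missing idea --- which is the paper's Lemma~\ref{lem.int-cpct0} --- is to use the soliton equation once more, now as a full \emph{tensor} identity paired against $\mathring{Ric}$ rather than through its two scalar contractions. Since $S$ is constant, $\dv\mathring{Ric}=\frac{m-2}{2m}\,dS=0$, so integrating the identity $\dv\big(\mathring{Ric}(X)\big)=(\dv\mathring{Ric})(X)+\big\langle\mathring{Ric},\tfrac12\mathcal{L}_Xg\big\rangle$ over the closed manifold and substituting $\tfrac12\mathcal{L}_Xg=-Ric+\lambda g+\theta\,d\xi\otimes d\xi$ gives
\begin{equation*}
0=\int_M\Big\langle\mathring{Ric},\tfrac12\mathcal{L}_Xg\Big\rangle\,dM
=-\int_M\|\mathring{Ric}\|^2\,dM+\theta\int_M\mathring{Ric}(\nabla\xi,\nabla\xi)\,dM,
\end{equation*}
hence $J=\frac1\theta\int_M\|\mathring{Ric}\|^2\,dM\geqslant0$. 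This is an exact identity, not a cancellation one should hope to discover; without it the inequality is simply not established for non-gradient $X$.

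There is a secondary gap in the rigidity part. From $\mathring{\nabla d\xi}=0$ you correctly get Obata's equation and the round sphere, but your plan to ``solve explicitly for $X$'' cannot work as stated: Eq.~\eqref{eq.tSR} determines only $\mathcal{L}_Xg$, so $X$ is determined at best up to a Killing field and need not itself be a gradient (the theorem claims only that the \emph{structure} is gradient, i.e. $\mathcal{L}_Xg=\mathcal{L}_{\nabla\eta}g$ for some $\eta$). The paper's argument is: on the sphere, using $Ric=\tfrac{S}{m}g$ and $\theta\,d\xi\otimes d\xi=\tfrac{\theta}{2}\nabla d\xi^2-\theta\xi\nabla d\xi$ together with Obata's equation for $\xi$, the field $Z=X-\tfrac{\theta}{2}\nabla\xi^2$ satisfies $\mathcal{L}_Zg=2\rho g$, i.e. is conformal; on an Einstein sphere the conformal factor $\rho$ is an eigenfunction with eigenvalue $\tfrac{S}{m-1}$ and therefore satisfies Obata's equation as well, which produces $u$ with $\mathcal{L}_{\nabla u}g=\mathcal{L}_Zg$, and one takes $\eta=u+\tfrac{\theta}{2}\xi^2$ and invokes Proposition~\ref{ex.p.Mc}. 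With these two supplements your outline becomes the paper's proof; without them it does not prove the theorem.
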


\section{Motivating the definition of the modified Ricci soliton}\label{ttRF}

The Ricci solitons are the corresponds special (self-similar) solutions of the Ricci flow which was introduced by Hamilton in \cite{hamilton1}. They has been extensively studied by many researchers as generalizations of Einstein manifolds. A Ricci soliton $(M^m,g,X,\lambda)$ is a Riemannian manifold $(M^m,g)$ endowed with a vector field $X\in\mathfrak{X}(M)$ satisfying
\begin{equation}
Rc_{g}+\dfrac{1}{2}\mathcal{L}_{X}g=\lambda g,
\end{equation}
for some constant $\lambda$. A Ricci soliton is expanding, steady or shrinking if $\lambda<0$, $\lambda=0$ or $\lambda>0$, respectively.  A gradient Ricci soliton satisfies
\begin{equation}
Rc_{g}+\nabla d\psi=\lambda g
\end{equation}
for some smooth potential function $\psi$ on $M$.

Studying gradient Ricci solitons $(\bar{M}^{m+n},\bar{g},\nabla\tilde\varphi,\lambda)$ that are realised as warped products $(\bar{M}^{m+n},\bar{g})= M^m\times_fF^n$, we observe that the base $M$ represents a generalization of $n$--quasi-Einstein metric, where $\tilde\varphi$ is the lift of $\varphi\in C^\infty(M)$ to $\bar{M}$ and $f$ is the warping function. Indeed, there exists a fundamental property of the functions $\varphi$ and $f$, namely:
\begin{equation}\label{EQMthmIntr}
Rc_{g}+\nabla d\varphi=\lambda g+\dfrac{n}{f}\nabla df,
\end{equation}
which consists in a Ricci-Hessian type equation, and can be found in \cite{RSWP}. We point out that the class of Ricci-Hessian type equations contain the class of $n$--quasi-Einstein metrics. For instance, for an $r$--quasi-Einstein metric $(M^m,g,\nabla h,\lambda)$ satisfying
\begin{equation}\label{r-QEM}
Rc_{g}+\nabla dh-\dfrac{1}{r}dh\otimes dh=\lambda g,
\end{equation}
we can take $r=4n$, $\varphi=\dfrac{h}{2}$ and $f=e^{-\varphi/n}$ in order to verify that $(M^m,g,\varphi,f,\lambda)$ satisfies the Eq.~\eqref{EQMthmIntr}. But, if we assume that $(M^m,g,\varphi,f,\lambda)$ satisfies an equation of the type \eqref{EQMthmIntr}. Then, we have
\begin{equation}\label{Eq3AuxEx1}
Rc_{g}+\nabla d\eta  = \lambda g + \dfrac{1}{n}d\xi\otimes d\xi,
\end{equation}
where $\xi=-n\ln(f)$ and $\eta=\varphi+\xi$. Note that if $\nabla\varphi$ is a homothetic vector field, then there is no distinction between the Eqs.~\eqref{EQMthmIntr} and \eqref{r-QEM}. However, it is well-known that the existence of a nontrivial homothetic vector field imply restrictions on the geometry of the manifold.

To complement our motivation, we assume that $(\bar{M},\bar{g},\tilde{Y},\lambda)$ is a Ricci soliton warped product, where $\bar{M}^{m+n}=M^m\times_fF^n$ and $\tilde Y$ is the horizontal lift of $Y\in \mathfrak{X}(M)$ to $\mathfrak{X}(\bar{M})$. Proposition $35$ in O'Neill~\cite{oneill} ensures that
\begin{equation*}
\mathcal{L}_{\tilde{Y}}\bar{g}=\widetilde{\mathcal{L}_Yg}.
\end{equation*}
By Corollary $43$ of \cite{oneill} the following equation is verified on the base
\begin{equation}\label{BX}
Rc_{g}+\dfrac{1}{2}\mathcal{L}_Yg=\lambda g+\dfrac{n}{f}\nabla df.
\end{equation}
Taking $\xi=-n\ln(f)$, we get
\begin{equation*}
\dfrac{n}{f}\nabla df=-\nabla d\xi + \dfrac{1}{n}d\xi\otimes d\xi.
\end{equation*}
Hence, Eq.~\eqref{BX} becomes
\begin{equation*}
Rc_{g}+\dfrac{1}{2}\mathcal{L}_Xg=\lambda g+\dfrac{1}{n}d\xi\otimes d\xi,
\end{equation*}
where $X=Y+\nabla\xi$. This latter equation was the first motivation to consider the subject of study this paper. In Section~\ref{Sec-CMRS}, we give the second motivation which is inspired by the Ricci-harmonic flow introduced by M\"uller \cite{muller}.

\begin{example}\label{Berger}
Let us consider $\mathbb{S}^3 =\{(z,w)\in \mathbb{C}^{2};\,|z|^{2}+|w|^{2}=1\}$ endowed with the family of metrics
\begin{equation*}
g_{\kappa,\tau}=\dfrac{4}{\kappa}\Big[g_\circ+ \left(\dfrac{4\tau^{2}}{\kappa}-1 \right) V^{\flat}\otimes V^{\flat}\Big],
\end{equation*}
where $g_\circ$ stands for the Euclidean metric on $\mathbb{S}^3$, $\,V_{(z,w)}=(iz,iw)$ for each $(z,w)\in \mathbb{S}^3$ while $\kappa,\,\tau$ are real numbers with $\kappa>0$ and $\tau\neq0.$  A Berger sphere is $(\mathbb{S}^3,g_{\kappa,\tau})$, which we denote  by $\mathbb{S}^3_{\kappa,\tau}$. Next we choose  $E_3=\dfrac{\kappa}{4\tau}V,$ in order to deduce that the Ricci tensor of $\mathbb{S}^3_{\kappa,\tau}$ satisfies
\begin{equation*}
Rc_{g_{\kappa,\tau}} = (\kappa-2\tau^2)g_{\kappa,\tau} +(4\tau^2-\kappa)E_3^{\flat}\otimes E_3^{\flat}.
\end{equation*}
Taking into account that $E_3$ is a Killing field, if we define $X = E_3$ as well as $\lambda=\kappa-2\tau^2$, then  we arrive at
\begin{equation*}
Rc_{g_{\kappa,\tau}}+\dfrac{1}{2}\mathcal{L}_{X}g_{\kappa,\tau} = \lambda g_{\kappa,\tau}+X^{\flat}\otimes X^{\flat}.
\end{equation*}
According to \eqref{eq.tSR}, $g_{\kappa,\tau}$ is  a modified Ricci soliton on $\mathbb{S}^3_{\kappa,\tau}$.
\end{example}

\section{Rigidity of compact modified Ricci almost soliton}

We begin by noting that for any $u\in C^\infty(M)$ holds
\begin{equation*}
du\otimes du=\dfrac{1}{2}\nabla du^2-u\nabla^2u,
\end{equation*}
so that Eq.~\eqref{eq.tSRgr} is equivalent to
\begin{equation}\label{f.grad.equ.}
Rc_{g}+\nabla d\Big(\eta-\dfrac{\theta}{2}\xi^2\Big)=\lambda g-\theta\xi\nabla d\xi.
\end{equation}

Eq.~\eqref{f.grad.equ.} will be useful to characterize the structures of modified Ricci almost solitons on the unit standard sphere $\mathbb{S}^m$. For this, we denote by $g$ the canonical metric on $\mathbb{S}^m$ and we consider the Obata's solution for the equation $\nabla d\xi =\dfrac{\Delta\xi}{m}g$. More precisely, we must have $\Delta (\Delta \xi)+m\Delta\xi=0$, see Obata~\cite{obata}. Whence, there exists a vector $v\in\mathbb{S}^m$ such that $\Delta\xi=h_{v} =-\dfrac{1}{m} \Delta h_{v}$, where $h_v(x)=\langle x,v\rangle$ is a height function on $\mathbb{S}^m$. Thus, for some constant $c_1$ we have
\begin{equation}\label{func xi}
\xi=c_1-\dfrac{h_v}{m}.
\end{equation}
Since $Rc_{g}=(m-1)g$ and $-m\nabla d\xi=\nabla dh_v=-h_v g$ we replace \eqref{func xi} into \eqref{f.grad.equ.} to get
\begin{equation}\label{nabla.eta.xi2}
\nabla d\Big[\eta-\dfrac{\theta}{2}\Big(c_1-\dfrac{h_v}{m}\Big)^2\Big]=\Big[\lambda+\theta\Big(\dfrac{h_v}{m}-c_1\Big)\dfrac{h_v}{m}-(m-1)\Big]g.
\end{equation}
Therefore, $\nabla \Psi$ is a conformal vector field on $(\mathbb{S}^m,g)$, where $\Psi=\eta-\dfrac{\theta}{2}\Big(c_1-\dfrac{h_v}{m}\Big)^2$. Again there must exist a vector $w\in\mathbb{S}^m$ and a constant $c_2$ such that
\begin{equation}\label{func eta}
\eta=\dfrac{\theta}{2}\Big(c_1-\dfrac{h_v}{m}\Big)^2-\dfrac{h_w}{m}+c_2.
\end{equation}
From \eqref{nabla.eta.xi2} and \eqref{func eta} we have
\begin{equation*}
-h_wg=\nabla d h_w=-m\nabla d\Psi=-m\Big[\lambda+\theta\Big(\dfrac{h_v}{m}-c_1\Big)\dfrac{h_v}{m}-(m-1)\Big]g.
\end{equation*}
Then
\begin{equation*}
\dfrac{h_w}{m}=\lambda+\theta\Big(\dfrac{h_v}{m}-c_1\Big)\dfrac{h_v}{m}-(m-1),
\end{equation*}
that is,
\begin{equation}\label{func lamb}
\lambda=\theta\Big(c_1-\dfrac{h_v}{m}\Big)\dfrac{h_v}{m}+\dfrac{h_w}{m}+m-1.
\end{equation}

With this considerations in mind we can write our first result as follows.

\begin{proposition}\label{ex.p.Mc}
The unique structures of gradient modified Ricci almost soliton $(\mathbb{S}^m,g,\nabla\eta,d\xi)$ with $\nabla\xi$ being a conformal vector field are described by smooth functions in \eqref{func xi}, \eqref{func eta} and \eqref{func lamb}.
\end{proposition}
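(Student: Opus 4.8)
The plan is to observe that the computation assembled in the paragraph preceding the statement already contains the substance of the argument; the proof amounts to organizing it into a two-way implication: first, that any gradient modified Ricci almost soliton on $(\mathbb{S}^m,g)$ whose $\nabla\xi$ is conformal must be given by the stated functions (uniqueness), and second, that those functions genuinely solve the defining equation (existence).

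For uniqueness, I would begin from the equivalent form \eqref{f.grad.equ.} of the soliton equation on the canonical sphere, where $Ric=(m-1)g$. The hypothesis that $\nabla\xi$ is conformal means precisely that the traceless part of the Hessian of $\xi$ vanishes, i.e. $\nabla d\xi=\frac{\Delta\xi}{m}g$; combined with $Ric=(m-1)g$ this forces $\Delta(\Delta\xi)+m\Delta\xi=0$ (Obata), so that $\Delta\xi$ is a first eigenfunction. Writing $\Delta\xi=h_v$ for a height function and using $\nabla dh_v=-h_v g$, one gets $\nabla d\xi=\frac{h_v}{m}g=\nabla d(-\frac{h_v}{m})$, whence integration over the connected sphere yields \eqref{func xi} with its free constant $c_1$. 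Feeding this back into \eqref{f.grad.equ.} produces \eqref{nabla.eta.xi2}, which asserts that $\Psi=\eta-\frac{\theta}{2}(c_1-\frac{h_v}{m})^2$ again has pure-trace Hessian, so $\nabla\Psi$ is conformal. A second application of Obata's classification of conformal gradient fields on $\mathbb{S}^m$ produces a vector $w$ and a constant $c_2$ with $\Psi=c_2-\frac{h_w}{m}$, which is exactly \eqref{func eta}. Comparing the trace coefficients on the two sides of \eqref{nabla.eta.xi2}, via $\nabla dh_w=-h_w g$, then isolates $\lambda$ and delivers \eqref{func lamb}.

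For existence, I would substitute \eqref{func xi}, \eqref{func eta} and \eqref{func lamb} directly into \eqref{eq.tSRgr} and verify the identity, using $Ric=(m-1)g$ together with the height-function relations $\nabla dh_v=-h_v g$ and $\nabla dh_w=-h_w g$; this simply reverses the steps above and is a routine verification. It is worth remarking that \eqref{func lamb} presents $\lambda$ as a genuinely nonconstant function of the point, through $h_v$ and $h_w$, consistent with the almost-soliton setting.

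The point deserving the most care, and what I regard as the main obstacle, is the correct and consistent invocation of Obata's theorem in the unit-sphere normalization: one must confirm that the conformal gradient fields on $\mathbb{S}^m$ are precisely the restrictions of linear height functions plus constants, that the relevant eigenvalue is $m$, and that the sign conventions for $\Delta$ and for $\nabla dh=-h g$ are tracked uniformly, so that $c_1,c_2$ and $v,w$ emerge as genuine free parameters rather than artifacts of a normalization slip. Once both Obata applications are pinned down, the remainder is bookkeeping.
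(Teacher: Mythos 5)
Your proposal is correct and takes essentially the same route as the paper: the paper's proof is exactly the computation preceding the statement, namely two successive applications of Obata's characterization (first to $\xi$, giving \eqref{func xi}, then to $\Psi=\eta-\frac{\theta}{2}\big(c_1-\frac{h_v}{m}\big)^2$, giving \eqref{func eta}) followed by the trace comparison that yields \eqref{func lamb}. Your explicit existence verification by back-substitution is a harmless supplement to the same argument.
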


For the next results we recall that the traceless part of a $(0,2)$--tensor $T$ in $(M^m,g)$ is defined by
\begin{equation*}
\mathring{T}=T-\dfrac{\tr(T)}{m}g
\end{equation*}
and the divergence of its corresponding $(1,1)$--tensor $T$ is defined as the $(0,1)$--tensor
\begin{equation*}
(\dv T)(v)(p) = \mathrm{tr}\big(w \mapsto (\nabla_w T)(v)(p)\big),
\end{equation*}
where $p\in M$, $v,w\in T_pM,$ $\nabla$ stands for the covariant derivative of $T$ and $\mathrm{tr}$ is the trace calculated in the metric $g.$

In what follows, we consider the musical isomorphism $\,^{\sharp}\!:\mathfrak{X}^*(M)\to \mathfrak{X}(M)$, that is, the inverse map of $\,^{*}\!:\mathfrak{X}(M)\to \mathfrak{X}^*(M)$ given by $X^*(\cdot)=g(X,\cdot)$, for all $X\in \mathfrak{X}(M)$.

\begin{lemma}\label{lem.int-cpct0}
Let $(M^m,g,X,\omega)$ be a compact modified Ricci almost soliton. Then holds the following identity
\begin{equation}\label{eq.int-cpct0}
\int_M\|\mathring{Rc_{g}}\|^2dM=\dfrac{m-2}{2m}\int_{M}\langle\nabla S,X\rangle dM+\theta\int_M\mathring{Rc_{g}}(\omega^\sharp,\omega^\sharp)dM.
\end{equation}
In particular, for $\omega=d\xi$ one has
\begin{align}\label{eq.int-cpct}
\nonumber\int_M\|\mathring{Rc_{g}}\|^2dM=&\ \dfrac{m-2}{2m}\int_{M}\langle\nabla S,X\rangle dM-\theta\int_M\|\mathring{\nabla d\xi}\|^2dM\\
& +\theta\dfrac{m-1}{m}\int_M\Big[(\Delta\xi)^2-\dfrac{S}{m-1}\|\nabla\xi\|^2\Big]dM.
\end{align}
\end{lemma}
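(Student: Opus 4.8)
The plan is to extract the identity by pairing the trace-free part of the structure equation with $\mathring{Ric}$ and then integrating over the compact manifold. First I would take the trace of \eqref{eq.tSR}, namely $S+\dv X=m\lambda+\theta\|\omega\|^2$, and subtract its pure-trace part to isolate
\[
\mathring{Ric}+\tfrac{1}{2}\mathring{\mathcal{L}_{X}g}=\theta\,\mathring{\omega\otimes\omega}.
\]
Taking the inner product with $\mathring{Ric}$ and using that $\mathring{Ric}$ is trace-free (so it detects only the trace-free part of any symmetric $2$--tensor, while $\langle\omega\otimes\omega,\mathring{Ric}\rangle=\mathring{Ric}(\omega^\sharp,\omega^\sharp)$) yields the pointwise identity
\[
\|\mathring{Ric}\|^2+\tfrac{1}{2}\langle\mathcal{L}_{X}g,\mathring{Ric}\rangle=\theta\,\mathring{Ric}(\omega^\sharp,\omega^\sharp).
\]
Its integral already produces the right-hand side of \eqref{eq.int-cpct0} except for the term $\tfrac{1}{2}\int_M\langle\mathcal{L}_{X}g,\mathring{Ric}\rangle\,dM$, which must be shown to equal $-\tfrac{m-2}{2m}\int_M\langle\nabla S,X\rangle\,dM$.

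This last integral is the core of the argument, and I expect it to be the main obstacle, since the constant $\tfrac{m-2}{2m}$ emerges only after two contributions partially cancel. Writing $\mathring{Ric}=Ric-\tfrac{S}{m}g$ and $(\mathcal{L}_{X}g)_{ij}=\nabla_iX_j+\nabla_jX_i$, I would integrate by parts the piece $\int_M\langle\mathcal{L}_{X}g,Ric\rangle\,dM=2\int_M R_{ij}\nabla^iX^j\,dM$ and invoke the contracted second Bianchi identity $\dv(Ric)=\tfrac{1}{2}\nabla S$ to obtain $\int_M\langle\mathcal{L}_{X}g,Ric\rangle\,dM=-\int_M\langle\nabla S,X\rangle\,dM$. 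Separately, $\langle\mathcal{L}_{X}g,g\rangle=2\,\dv X$ gives $-\tfrac{2}{m}\int_M S\,\dv X\,dM=\tfrac{2}{m}\int_M\langle\nabla S,X\rangle\,dM$ after one more integration by parts. Combining, $\int_M\langle\mathcal{L}_{X}g,\mathring{Ric}\rangle\,dM=\bigl(-1+\tfrac{2}{m}\bigr)\int_M\langle\nabla S,X\rangle\,dM=-\tfrac{m-2}{m}\int_M\langle\nabla S,X\rangle\,dM$, which upon halving yields \eqref{eq.int-cpct0}.

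For the specialization to $\omega=d\xi$, the task reduces to evaluating $\int_M\mathring{Ric}(\nabla\xi,\nabla\xi)\,dM$, for which I would use the Bochner formula $\tfrac{1}{2}\Delta\|\nabla\xi\|^2=\|\nabla d\xi\|^2+\langle\nabla\xi,\nabla\Delta\xi\rangle+Ric(\nabla\xi,\nabla\xi)$. On the compact $M$ the integral of $\Delta\|\nabla\xi\|^2$ vanishes, and integrating the cross term by parts gives $\int_M\langle\nabla\xi,\nabla\Delta\xi\rangle\,dM=-\int_M(\Delta\xi)^2\,dM$; hence $\int_M Ric(\nabla\xi,\nabla\xi)\,dM=\int_M\bigl[(\Delta\xi)^2-\|\nabla d\xi\|^2\bigr]\,dM$. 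Substituting $\|\nabla d\xi\|^2=\|\mathring{\nabla d\xi}\|^2+\tfrac{1}{m}(\Delta\xi)^2$ together with $\mathring{Ric}(\nabla\xi,\nabla\xi)=Ric(\nabla\xi,\nabla\xi)-\tfrac{S}{m}\|\nabla\xi\|^2$, and collecting the $(\Delta\xi)^2$ and $\|\nabla\xi\|^2$ terms into the common factor $\tfrac{m-1}{m}$, then leads to \eqref{eq.int-cpct}. The only points requiring care are the sign conventions for $\Delta$ and for the Bianchi identity, together with the trace identity $\|\mathring{T}\|^2=\|T\|^2-\tfrac{1}{m}(\tr T)^2$.
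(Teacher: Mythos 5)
Your proposal is correct and takes essentially the same route as the paper: both pair the soliton equation against $\mathring{Ric}$, obtain the coefficient $\frac{m-2}{2m}$ from the contracted Bianchi identity plus integration by parts (the paper packages this as $\int_M \dv(\mathring{Ric}(X))\,dM=0$ together with $\dv\mathring{Ric}=\frac{m-2}{2m}\nabla S$, while you split $\mathring{Ric}=Ric-\frac{S}{m}g$ and integrate the two pieces by parts separately), and then handle the case $\omega=d\xi$ by the identical Bochner-formula computation.
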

\begin{proof} A straightforward computation shows that
\begin{align*}
\dv(\mathring{Rc_{g}}(X)) &= (\dv\mathring{Rc_{g}})(X)+\langle\mathring{Rc_{g}},\nabla X\rangle\\
&= \dfrac{m-2}{2m}\langle\nabla S,X\rangle+\langle\mathring{Rc_{g}},\dfrac{1}{2}\mathcal{L}_Xg\rangle\\
&= \dfrac{m-2}{2m}\langle\nabla S,X\rangle+\langle\mathring{Rc_{g}},-Rc_{g}+\lambda g+\theta\omega\otimes\omega\rangle.
\end{align*}
Therefore,
\begin{equation}\label{general}
\|\mathring{Rc_{g}}\|^2=-\dv(\mathring{Rc_{g}}(X))+\dfrac{m-2}{2m}\langle\nabla S,X\rangle+\theta\mathring{Rc_{g}}(\omega^\sharp,\omega^\sharp).
\end{equation}
We obtain \eqref{eq.int-cpct0} by integrating of \eqref{general}. Now, we use the Bochner's formula to get
\begin{align*}
\mathring{Rc_{g}}(\nabla u,\nabla u) &= Rc_{g}(\nabla u,\nabla u)-\dfrac{S}{m}\|\nabla u\|^2\\
&=\dfrac{1}{2}\Delta\|\nabla u\|^2-\|\nabla d u\|^2-\langle\nabla\Delta u,\nabla u\rangle-\dfrac{S}{m}\|\nabla u\|^2\\
&=\dfrac{1}{2}\Delta\|\nabla u\|^2-\|\mathring{\nabla du}\|^2-\dfrac{1}{m}(\Delta u)^2-\langle\nabla\Delta u,\nabla u\rangle-\dfrac{S}{m}\|\nabla u\|^2.
\end{align*}
Applying Stokes' theorem, we obtain
\begin{align}\label{CStokes}
\int_M\Big[\mathring{Rc_{g}}(\nabla u,\nabla u) +\|\mathring{\nabla d u}\|^2\Big] dM =\dfrac{m-1}{m}\int_M\Big[(\Delta u)^2-\dfrac{S}{m-1}\|\nabla u\|^2\Big]dM
\end{align}
for all $u\in C^\infty(M)$. In particular, taking $u=\xi$ in \eqref{CStokes} and $\omega=d\xi$ in \eqref{eq.int-cpct0} we complete the proof.
\end{proof}

\subsection{Proof of Theorem \ref{rig.cpct}}
\begin{proof}
In the hypothesis of theorem we have from \eqref{eq.int-cpct}
\begin{equation*}
0\leqslant\int_M\|\mathring{Rc_{g}}\|^2dM + \theta\int_M\|\mathring{\nabla d\xi}\|^2dM=\theta\dfrac{m-1}{m}\Big(\alpha-\dfrac{S}{m-1}\Big)\int_M\|\nabla\xi\|^2dM.
\end{equation*}
So, the eigenvalue $\alpha$ must satisfy $\alpha\geqslant\dfrac{S}{m-1}$. If the equality occurs, then
\begin{equation*}
\nabla d\xi=-\dfrac{S\xi}{m(m-1)}g.
\end{equation*}
Thus, by a classical result due to Obata \cite{obata} we have that $(M^m,g)$ is isometric to a standard sphere $\mathbb{S}^m(r)$, where $r=\sqrt{m(m-1)/S}$.
It remains to prove that $X$ can be changed by $\nabla\eta$ for some smooth function $\eta$. By Eq.~\eqref{eq.tSR} one has
\begin{align*}
\dfrac{S}{m}g+\dfrac{1}{2}\mathcal{L}_Xg &= \lambda g+\theta d\xi\otimes d\xi\\
&= \lambda g- \theta\xi\nabla d\xi+\dfrac{\theta}{2}\nabla d\xi^2\\
&= \lambda g+\dfrac{\theta S\xi^2}{m(m-1)}g+\dfrac{\theta}{2}\nabla d\xi^2.
\end{align*}
Whence
\begin{equation*}
\mathcal{L}_Zg=2\rho g,
\end{equation*}
where
\begin{equation*}
Z=X-\dfrac{\theta}{2}\nabla\xi^2 \ \ \mbox{and} \ \
\rho=\dfrac{\dv Z}{m}=\lambda+\dfrac{\theta S\xi^2}{m(m-1)}-\dfrac{S}{m}.
\end{equation*}
Since $(\mathbb{S}^m(r),g)$ is Einstein manifold, we conclude that the conformal factor $\rho$ is a eigenfunction of the Laplacian with eigenvalue $\dfrac{S}{m-1}$. Then, from \eqref{CStokes} we obtain the Obata's equation for $\rho$
\begin{equation*}
\nabla d\rho=-\dfrac{S\rho}{m(m-1)}g,
\end{equation*}
so setting
\begin{equation*}
u=-\dfrac{m(m-1)}{S}\rho,
\end{equation*}
we get
\begin{align*}
\dfrac{1}{2}\mathcal{L}_{\nabla u}g&=\nabla du=-\dfrac{m(m-1)}{S}\nabla d\rho
=\rho g\\
&=\dfrac{1}{2}\mathcal{L}_Zg=\dfrac{1}{2}\mathcal{L}_Xg-\dfrac{1}{2}\mathcal{L}_{\frac{\theta}{2}\nabla\xi^2}g.
\end{align*}
Thus
\begin{align*}
\dfrac{1}{2}\mathcal{L}_Xg
&=\dfrac{1}{2}\mathcal{L}_{\nabla\left(u+\frac{\theta}{2}\xi^2\right)}=:\frac{1}{2}\mathcal{L}_{\nabla\eta}g.
\end{align*}
Hence, up to rescaling and a constant, $(\mathbb{S}^m(r),g,\nabla\eta,d\xi)$ is a gradient modified Ricci almost soliton with functions given by Proposition~\ref{ex.p.Mc}.
\end{proof}

\section{An obstruction in the case compact and a compactness criterion.}

We now present two results to modified Ricci almost solitons: an obstruction in the case compact and a compactness criterion.

\begin{proposition}
Let $(M^m,g,X,\omega)$ be a compact modified Ricci almost soliton. If $m\lambda-S$ is a nonnegative function, then $\omega$ is identically null and $X$ is a Killing vector field.
\end{proposition}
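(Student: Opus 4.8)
The plan is to combine a trace-and-integrate argument with the integral identity of Lemma~\ref{lem.int-cpct0}. First I would take the $g$-trace of the defining equation \eqref{eq.tSR}. Using $\tr(Ric)=S$, $\tr(\mathcal{L}_Xg)=2\dv X$, and $\tr(\omega\otimes\omega)=\|\omega\|^2$, this yields the pointwise identity
\[
\dv X = m\lambda - S + \theta\|\omega\|^2.
\]
Integrating over the closed manifold $M$ and invoking the divergence theorem, so that $\int_M \dv X\,dM = 0$, gives
\[
0 = \int_M (m\lambda - S)\,dM + \theta\int_M \|\omega\|^2\,dM.
\]

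The first key step is a sign argument. Since $m\lambda - S \geqslant 0$ by hypothesis and $\theta>0$ with $\|\omega\|^2 \geqslant 0$, both integrands on the right are nonnegative, so each integral must vanish separately. From $\int_M \|\omega\|^2\,dM = 0$ and continuity we conclude $\omega \equiv 0$, and from $\int_M (m\lambda - S)\,dM = 0$ together with $m\lambda - S \geqslant 0$ we conclude $m\lambda = S$ pointwise. In particular $\dv X = 0$.

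The remaining, and more delicate, task is to upgrade ``$X$ divergence-free'' to ``$X$ Killing'', that is, to show $\mathring{Ric} = 0$. Here I would feed $\omega = 0$ into Lemma~\ref{lem.int-cpct0}, whose identity \eqref{eq.int-cpct0} collapses to
\[
\int_M \|\mathring{Ric}\|^2\,dM = \dfrac{m-2}{2m}\int_M \langle\nabla S, X\rangle\,dM.
\]
Because $\dv X = 0$, the product rule gives $\langle \nabla S, X\rangle = \dv(SX)$, whose integral over the closed manifold vanishes by the divergence theorem; hence the right-hand side is zero (for $m=2$ the coefficient already kills it). Therefore $\int_M \|\mathring{Ric}\|^2\,dM = 0$, forcing $\mathring{Ric}\equiv 0$, so $g$ is Einstein with $Ric = \tfrac{S}{m}g = \lambda g$.

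Finally, substituting $\omega=0$ and $Ric = \lambda g$ back into \eqref{eq.tSR} yields $\tfrac{1}{2}\mathcal{L}_Xg = \lambda g - Ric = 0$, that is, $X$ is a Killing field, which completes the argument. The main obstacle is precisely this last upgrade, since divergence-free is strictly weaker than Killing in general; the argument genuinely uses the contracted second Bianchi identity (encoded in $\dv\mathring{Ric} = \tfrac{m-2}{2m}\nabla S$ inside the Lemma) together with the just-established $\dv X = 0$ to force the Einstein condition.
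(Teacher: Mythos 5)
Your proof is correct, and its second half is exactly the paper's: feed $\omega\equiv0$ into Lemma~\ref{lem.int-cpct0}, use the divergence-free property of the field to integrate $\langle\nabla S,\cdot\rangle$ by parts to zero, conclude $\int_M\|\mathring{Ric}\|^2dM=0$, and then read off the Killing condition from \eqref{eq.tSR}. The difference is in the first half. The paper first applies the Hodge--de Rham decomposition $X=\nabla h+Y$ with $\dv Y=0$, so that the trace of \eqref{eq.tSR} becomes $\Delta h=m\lambda-S+\theta\|\omega\|^2\geqslant0$; Hopf's theorem then forces $h$ to be constant, and the resulting pointwise identity $0=m\lambda-S+\theta\|\omega\|^2$ kills both terms and leaves $X=Y$ divergence-free. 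You bypass the decomposition entirely: tracing gives $\dv X=m\lambda-S+\theta\|\omega\|^2$, a nonnegative function whose integral vanishes by the divergence theorem, hence it is identically zero. The two executions carry the same content --- ``a nonnegative function with zero total integral vanishes'' is precisely what Hopf's theorem accomplishes for $\Delta h$ --- but yours is more elementary (no Hodge theory, no maximum principle) and delivers directly the only fact the second half needs, namely $\dv X=0$ together with $m\lambda=S$ and $\omega\equiv0$. The paper's decomposition yields nothing extra for this proposition; at most it makes explicit that any gradient part of $X$ is trivial. Your closing remark is also the right diagnosis: the genuine input that upgrades divergence-free to Killing is the contracted second Bianchi identity $\dv\mathring{Ric}=\frac{m-2}{2m}dS$ hidden inside Lemma~\ref{lem.int-cpct0}, used identically in both arguments.
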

\begin{proof}
Initially by Hodge-de Rham decomposition theorem (see e.g. \cite{warner} p. 223) we can write $X=\nabla h +Y$, where $h\in C^\infty(M)$ and $Y\in\mathfrak{X}(M)$ with $\dv Y=0$. Now, taking the trace in Eq.~\eqref{eq.tSR} we obtain
\begin{equation*}
\Delta h=m\lambda-S + \theta\|\omega\|^2\geqslant 0,
\end{equation*}
i.e., $h$ is a subharmonic function, so $h$ is constant by Hopf's theorem. Then,
\begin{equation*}
0=\Delta h= m\lambda-S +\theta\|\omega\|^2\geqslant0.
\end{equation*}
Since both $m\lambda-S$ and $\theta\|\omega\|^2$ are nonnegative, follows that $m\lambda=S$ and $\omega\equiv0$. Moreover, $X=Y$ and
\begin{equation}\label{E-K}
\mathring{Rc_{g}}=-\dfrac{1}{2}\mathcal{L}_Yg.
\end{equation}
Using Eq.~\eqref{eq.int-cpct0} we deduce, for $m\geqslant3,$ that
\begin{equation*}
0\leqslant\int_M\|\mathring{Rc_{g}}\|^2dM=\dfrac{m-2}{2m}\int_M\langle\nabla S,Y\rangle dM=-\dfrac{m-2}{2m}\int_M S\dv YdM=0.
\end{equation*}
Hence, $(M^m,g)$ is an Einstein manifold and $Y$ is a Killing vector field by \eqref{E-K}. For $m=2$, the conclusion of $Y$ to be a Killing vector field follows from Eq.~\eqref{E-K}, since every surface is an Einstein manifold.
\end{proof}

\begin{proposition}\label{cpct.2}
Let $(M^m,g)$ be a complete Riemannian manifold satisfying
\begin{equation*}
Rc_{g}+\dfrac{1}{2}\mathcal{L}_Xg\geqslant\lambda g+\theta\omega\otimes\omega
\end{equation*}
for a vector field $X:M \to TM$, a $1$--form $\omega:M \to TM^*$, a smooth function $\lambda\in C^\infty(M)$ and a constant $\theta$. Then, $M$ is compact provided that there exist positive constants $c_1$ and $c_2$ such that: $\|X\|\leqslant c_1$ and $(\lambda\circ\gamma)+\theta \big(\omega(\gamma')\big)^2\geqslant c_2$ for all geodesic $\gamma$. Moreover,
\begin{equation*}
\mathrm{diam}(M)\leqslant\dfrac{\pi}{c_2}\left(c_1+\sqrt{c_1^2+(m-1)c_2}\right).
\end{equation*}
In particular, the fundamental group $\pi_1(M)$ is finite.
\end{proposition}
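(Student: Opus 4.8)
The plan is to prove a Bonnet--Myers type compactness theorem, so the natural tool is a second variation / Ricci comparison argument along a minimizing geodesic. Suppose for contradiction that $M$ is noncompact (or simply that some minimizing geodesic is very long). Since $M$ is complete, I can take a unit-speed minimizing geodesic $\gamma:[0,\ell]\to M$ of length $\ell$ between two points, and the second variation formula guarantees that for any orthonormal parallel vector fields $E_1,\dots,E_{m-1}$ along $\gamma$ orthogonal to $\gamma'$, with variation vector fields $V_i(t)=\sin(\pi t/\ell)\,E_i(t)$, the index form is nonnegative:
\begin{equation*}
\sum_{i=1}^{m-1}\int_0^\ell\Big[\|V_i'\|^2-\langle R(V_i,\gamma')\gamma',V_i\rangle\Big]\,dt\geqslant 0.
\end{equation*}
Summing over $i$ converts the curvature term into the Ricci curvature of $\gamma'$, giving
\begin{equation*}
\int_0^\ell\Big[(m-1)\frac{\pi^2}{\ell^2}\cos^2\!\Big(\frac{\pi t}{\ell}\Big)-\mathrm{Ric}(\gamma',\gamma')\sin^2\!\Big(\frac{\pi t}{\ell}\Big)\Big]\,dt\geqslant 0.
\end{equation*}

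The key step is to insert the soliton inequality to bound $\mathrm{Ric}(\gamma',\gamma')$ from below. Evaluating the hypothesis on the unit vector $\gamma'$ gives
\begin{equation*}
\mathrm{Ric}(\gamma',\gamma')\geqslant (\lambda\circ\gamma)+\theta\big(\omega(\gamma')\big)^2-\tfrac{1}{2}(\mathcal{L}_Xg)(\gamma',\gamma')\geqslant c_2-\tfrac{1}{2}(\mathcal{L}_Xg)(\gamma',\gamma'),
\end{equation*}
using the stated lower bound $(\lambda\circ\gamma)+\theta(\omega(\gamma'))^2\geqslant c_2$. Now the Lie derivative term is handled by recognizing that along $\gamma$ one has $\tfrac{1}{2}(\mathcal{L}_Xg)(\gamma',\gamma')=\langle\nabla_{\gamma'}X,\gamma'\rangle=\frac{d}{dt}\langle X,\gamma'\rangle$, since $\gamma'$ is parallel. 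Thus the troublesome term is an exact derivative, which is precisely why the $C^0$ bound $\|X\|\leqslant c_1$ suffices rather than a gradient bound. Substituting and using $\sin^2=\tfrac12(1-\cos(2\pi t/\ell))$ for the $c_2$ term while integrating the derivative term by parts against $\sin^2(\pi t/\ell)$, I obtain
\begin{equation*}
(m-1)\frac{\pi^2}{2\ell}\geqslant c_2\frac{\ell}{2}-\int_0^\ell\frac{d}{dt}\langle X,\gamma'\rangle\,\sin^2\!\Big(\frac{\pi t}{\ell}\Big)\,dt.
\end{equation*}

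The integration by parts turns the last integral into $\int_0^\ell\langle X,\gamma'\rangle\,\frac{\pi}{\ell}\sin(2\pi t/\ell)\,dt$ (the boundary terms vanish because $\sin^2$ kills both endpoints), and this is bounded in absolute value by $c_1\cdot\frac{\pi}{\ell}\int_0^\ell|\sin(2\pi t/\ell)|\,dt=c_1\cdot\frac{\pi}{\ell}\cdot\frac{2\ell}{\pi}=2c_1$; a cleaner route keeps $\langle X,\gamma'\rangle$ differentiated and bounds $\int\sin^2$ directly, yielding a term linear in $c_1$. Either way I arrive at a quadratic inequality in $\ell$ of the form $c_2\ell^2-2\pi c_1\,\ell-(m-1)\pi^2\leqslant 0$, whose largest root gives
\begin{equation*}
\ell\leqslant\frac{\pi}{c_2}\Big(c_1+\sqrt{c_1^2+(m-1)c_2}\Big),
\end{equation*}
matching the claimed diameter bound. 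Since every minimizing geodesic obeys this length bound, $M$ has bounded diameter; completeness plus Hopf--Rinow then forces $M$ compact. The finiteness of $\pi_1(M)$ follows by applying the identical argument to the universal cover $\widetilde M$, which inherits the same lower bound (the lifted vector field and one-form satisfy the same pointwise estimates), so $\widetilde M$ is compact and the deck group, acting freely and properly discontinuously on a compact manifold, must be finite. The main obstacle is getting the constants in the quadratic inequality exactly right so that the discriminant and root reproduce the stated bound; the conceptual content is entirely in the observation that the $\mathcal{L}_Xg$ term is a total derivative controllable by the mere boundedness of $X$.
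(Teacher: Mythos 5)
Your proof is correct, and it shares with the paper the same key reduction: evaluating the soliton inequality on $\gamma'$ and observing that $\tfrac{1}{2}(\mathcal{L}_Xg)(\gamma',\gamma')=\tfrac{d}{ds}\langle X,\gamma'\rangle$ is a total derivative along a geodesic, so that $Ric(\gamma',\gamma')\geqslant c_2+f'(s)$ with $|f|=|\langle -X,\gamma'\rangle|\leqslant c_1$ by Cauchy--Schwarz. The difference is in what happens next: the paper stops at this pointwise inequality and invokes Galloway's generalization of Myers' theorem as a black box (Theorem 1.2 of \cite{galloway} for compactness and the diameter estimate, Theorem 1.3 for finiteness of $\pi_1$), whereas you essentially reprove Galloway's theorem inline --- second variation along a minimizing geodesic with test fields $\sin(\pi t/\ell)E_i$, integration by parts to convert the $f'$ term into one controlled by $\|X\|\leqslant c_1$, and the covering-space argument for $\pi_1$. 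Your route is more self-contained, and the bookkeeping does close: the honest computation gives $c_2\ell^2-4c_1\ell-(m-1)\pi^2\leqslant 0$ (coefficient $4c_1$, not $2\pi c_1$ as you wrote), but since $4<2\pi$ this implies your quadratic $c_2\ell^2-2\pi c_1\ell-(m-1)\pi^2\leqslant 0$, whose largest root is exactly the stated bound $\frac{\pi}{c_2}\bigl(c_1+\sqrt{c_1^2+(m-1)c_2}\bigr)$; in fact your direct argument yields the marginally sharper bound $\frac{1}{c_2}\bigl(2c_1+\sqrt{4c_1^2+(m-1)\pi^2c_2}\bigr)$. What the paper's approach buys is brevity and the diameter constant in precisely Galloway's form; what yours buys is independence from the reference and transparency about where each hypothesis enters.
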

\begin{proof}
By hypothesis we have
\begin{equation*}
Rc_{g}\geqslant\lambda g +\theta\omega\otimes\omega -\dfrac{1}{2}\mathcal{L}_Xg.
\end{equation*}	
Let $p$ be a point in $M$ and consider any geodesic $\gamma:[0,\infty)\rightarrow M$ emanating from $p$ and parameterized by arc length $s.$ Thus,
\begin{align*}
Rc_{g}(\gamma',\gamma') &\geqslant (\lambda\circ\gamma)\cdot g(\gamma',\gamma') +\theta\big(\omega(\gamma')\big)^2 -\dfrac{1}{2}(\mathcal{L}_Xg)(\gamma',\gamma')\\
&= (\lambda\circ\gamma) +\theta\big(\omega(\gamma')\big)^2 -\dfrac{d}{ds}g(X,\gamma')\\
&\geqslant c_2 +\dfrac{d}{ds}g(-X,\gamma').
\end{align*}
Then, defining the function $f:[0,\infty)\rightarrow\mathbb{R}$ by $f(s)=g(-X_{\gamma(s)},\gamma'(s))$ we obtain by Cauchy-Schwarz inequality that
\begin{equation*}
\|f(s)\|\leqslant\|X_{\gamma(s)}\|\leqslant c_1.
\end{equation*}
Hence, we can use Theorem $1.2$ of Galloway~\cite{galloway}, from which $M$ is compact and satisfy the required diameter estimate. The fundamental group $\pi_1(M)$ is finite by \cite[Theorem $1.3$]{galloway}.
\end{proof}

\begin{remark}
We point out that if $\theta\geqslant0$ and $\lambda$ is a positive constant, the previous result is covered by M. Fern\'andez-L\'opez and E. Garc\'ia-R\'io~\cite[Theorems~$1$ and $2$]{FLGR} in which the authors used the Ambrose's compactness criterion~\cite{Ambrose}. In addition, we can again apply the Theorem $1.2$ in \cite{galloway} for the following estimated diameter:
\begin{equation*}
\mathrm{diam}(M)\leqslant\dfrac{\pi}{\lambda}\left(c_{1}+\sqrt{c_{1}^{2}+(m-1)\lambda}\right).
\end{equation*}
\end{remark}

For instance, we recall that Berger's Sphere $\mathbb{S}^{3}_{\kappa,\tau}$, in Example \ref{Berger}, with $4\tau^2-\kappa>0$, we have the estimate
\begin{align*}
Rc_{g}(\gamma',\gamma')&=(\kappa-2\tau^2)\cdot g_{\kappa,\tau}(\gamma',\gamma')+(4\tau^2-\kappa)\cdot\big(E_3^*(\gamma')\big)^2\\
&=(\kappa-2\tau^2)+(4\tau^2-\kappa)\cdot\big(E_3^*(\gamma')\big)^2\\
&\geqslant\kappa-2\tau^2,
\end{align*}
for all geodesics $\gamma$. If $\kappa>2\tau^2$, taking $X=E_3$, $\lambda=\kappa-2\tau^2,$ $\omega=E_3^*$ and $\theta=4\tau^2-\kappa$ we can verify easily that the conditions in Proposition \ref{cpct.2} are satisfied.


\section{Characterizing the modified Ricci soliton}\label{Sec-CMRS}

In this section, we characterize a modified Ricci soliton as part of a special solution of the modified Ricci-harmonic flow.

\subsection{Short-time existence and uniqueness\label{Exist-Unicit}}

In this section we introduce the modified Ricci-harmonic flow, then we prove, for the compact case, the short-time existence and uniqueness theorem for this flow. For this we will use the ``DeTurck's Trick'' (see \cite{DeT}) and script written by Brendle in Section $2.2$ his book \cite{simon}, which refers to the short-time existence and uniqueness for the flow of Ricci (the proof present here is {\it mutatis mutandis} of this script). For our case, we define the modified Ricci-harmonic-DeTurck flow and finally we define the modified Ricci-harmonic solitons as special solutions of the modified Ricci-harmonic flow.

\begin{definition}\label{tFR}
Let $M^m,N^n$ be smooth manifolds and $h$ be a fixed Riemannian metric and $\omega_N$ be a $1$--form on $N.$ Let $g(t)$ be a one-parameter family of Riemannian metrics on $M$ and $\phi(t)$ be a one-parameter family of smooth maps from $M$ to $N$ and consider $\omega(t):=\phi(t)^*\omega_N$. We say that $(g(t),\phi(t))$, $t\in[0,\varepsilon)$, is a solution to the modified Ricci-harmonic flow
\begin{equation}\label{eq.tFR}
\begin{cases}
\dfrac{\partial}{\partial t}g(t)=-2Rc_{g(t)}+2\theta\omega(t)\otimes\omega(t),\\
\dfrac{\partial}{\partial t}\phi(t)=\Delta_{g(t),h}\phi(t),
\end{cases}
\end{equation}
if it satisfies \eqref{eq.tFR}, where $\theta$ is a positive constant and $\Delta_{g(t),h}\phi(t)$ is the tension field of map $\phi(t)$ with respect to metrics $g(t)$ and $h$, under evolution of metrics $g(t)$ and maps $\phi(t)$ for each $t\in[0,\varepsilon)$ varying smoothly.
\end{definition}

We also note that the modified Ricci-harmonic flow generalizes the flow introduced by List in \cite{list} and is a modification of the Ricci-harmonic flow defined by M\"uller in \cite{muller}, for this reason we adopt such a nomenclature.

\begin{definition}\label{tFR-DeT}
Let $(M^m,\bar{g})$ and $(N^n,h)$ be two fixed Riemannian manifolds, and $\omega_N$ a fixed $1$--form on $N$. Let $\tilde{g}(t)$ be a one-parameter family of Riemannian metrics on $M$,  $\tilde{\phi}(t)$ a one-parameter family of smooth maps from $M$ to $N$ and consider $\tilde{\omega}(t):=\tilde{\phi}(t)^*\omega_N$. We say that $(\tilde{g}(t),\tilde{\phi}(t))$, $t\in[0,\varepsilon)$, is a solution to the modified Ricci-harmonic-DeTurck flow
\begin{equation}\label{FRDTm}
\begin{cases}
\dfrac{\partial}{\partial t}\tilde{g}(t)=-2Rc_{\tilde{g}(t)}+2\theta\tilde{\omega}(t)\otimes\tilde{\omega}(t)-\mathcal{L}_{Z_t}\tilde{g}(t),\\
\dfrac{\partial}{\partial t}\tilde{\phi}(t)=\Delta_{\tilde{g}(t),h}\tilde{\phi}(t)-\mathcal{L}_{Z_t}\tilde{\phi}(t),
\end{cases}
\end{equation}
it is satisfies \eqref{FRDTm}, where $\theta$ is a positive constant, $\Delta_{\tilde{g}(t),h}\tilde{\phi}(t)$ is the tension field of map $\tilde{\phi}(t)$ with respect to metrics $\tilde{g}(t)$ and $h$, under evolution of metrics $\tilde{g}(t)$ and maps $\tilde{\phi}(t)$ for each $t\in[0,\varepsilon)$ varying smoothly. While $Z_t=\Delta_{\tilde{g}(t),\bar{g}}\mathrm{id}_M$ is the tension field of identity map $\mathrm{id}_M:M\to M$ with respect to metrics  $\tilde{g}(t)$ e $\bar{g}$, under evolution of metrics $\tilde{g}(t)$ for each $t\in[0,\varepsilon)$ varying smoothly.
\end{definition}

The result below assures the short-time existence and uniqueness for the modified Ricci-harmonic-DeTurck flow.

\begin{proposition}\label{exs.tFRDet}
Let $(M^m,\bar{g})$ and $(N^n,h)$ be two fixed compact Riemannian manifolds, and $\omega_N$ a fixed $1$--form on $N$. Given a Riemannian metric $g$ em $M$ and a smooth map $\phi:(M,g)\to(N,h)$, there exist a real number $\varepsilon>0,$ one-parameter families of metrics
$\tilde{g}(t)$, and smooth maps $\tilde{\phi}(t)$, $t\in[0,\varepsilon)$, such that  $(\tilde{g}(t),\tilde{\phi}(t))$, $t\in[0,\varepsilon)$, is a solution of modified Ricci-harmonic-DeTurck flow, with initial data $(\tilde{g}(0),\tilde{\phi}(0))=(g,\phi)$. Moreover, the solution  $(\tilde{g}(t),\tilde{\phi}(t))$, $t\in[0,\varepsilon)$, is unique.
\end{proposition}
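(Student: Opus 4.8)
The plan is to follow DeTurck's trick exactly as in Brendle's treatment of the Ricci flow (Section $2.2$ of \cite{simon}), the only new ingredients being the coupling term $2\theta\tilde\omega(t)\otimes\tilde\omega(t)$ in the metric equation and the DeTurck-corrected harmonic map heat equation for $\tilde\phi(t)$. First I would write the system \eqref{FRDTm} in local coordinates on $M$ and, using the compactness of $N$ to work in coordinate charts (or after an isometric embedding $N\hookrightarrow\mathbb{R}^K$), in components $\tilde\phi^\alpha$, so that it becomes a quasilinear system for the unknowns $(\tilde g_{ij},\tilde\phi^\alpha)$. The key structural fact is that $Z_t=\Delta_{\tilde g(t),\bar g}\,\mathrm{id}_M$ has components $Z_t^k=\tilde g^{ij}\big(\bar\Gamma^k_{ij}-\Gamma^k_{ij}(\tilde g)\big)$ depending on the first derivatives of $\tilde g$, so that the term $\mathcal{L}_{Z_t}\tilde g$ contributes genuine second-order terms in $\tilde g$, precisely designed to cancel the gauge degeneracy of the Ricci tensor.

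Second, I would compute the principal symbol of the right-hand side, regarding it as an operator acting on the pair $(\tilde g,\tilde\phi)$. The classical DeTurck computation gives that $-2Ric_{\tilde g}-\mathcal{L}_{Z_t}\tilde g$ has principal symbol $-|\xi|^2_{\tilde g}\,\mathrm{Id}$ on symmetric $(0,2)$--tensors, hence is strictly elliptic (negative-definite symbol). For the map equation, the tension field $\Delta_{\tilde g,h}\tilde\phi$ is the Eells--Sampson operator, whose principal part is $\tilde g^{ij}\partial_i\partial_j\tilde\phi^\alpha$, again of symbol $-|\xi|^2_{\tilde g}$. The crucial observation is that the coupling terms are of strictly lower order: the term $2\theta\tilde\omega\otimes\tilde\omega$, with $\tilde\omega_i=(\omega_N)_\alpha(\tilde\phi)\,\partial_i\tilde\phi^\alpha$, involves only first derivatives of $\tilde\phi$ and no derivatives of $\tilde g$, while the DeTurck correction $\mathcal{L}_{Z_t}\tilde\phi$, whose components are $Z_t^k\partial_k\tilde\phi^\alpha$, is first order in $\tilde\phi$. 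Consequently the principal symbol of the full system is block-diagonal, each block being $-|\xi|^2_{\tilde g}$ times the identity, so that the combined operator $(\tilde g,\tilde\phi)\mapsto(\partial_t\tilde g-[\cdots],\ \partial_t\tilde\phi-[\cdots])$ is strictly parabolic.

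Third, having established strict parabolicity, I would invoke the standard short-time existence and uniqueness theory for quasilinear strictly parabolic systems on compact manifolds, exactly as in \cite{simon}. Concretely, one linearizes \eqref{FRDTm} about the initial data $(g,\phi)$, solves the resulting linear parabolic system in parabolic H\"older (or Sobolev) spaces by maximal regularity, and then runs a contraction-mapping argument to produce a solution of the nonlinear system on a short interval $[0,\varepsilon)$ with $(\tilde g(0),\tilde\phi(0))=(g,\phi)$; uniqueness follows from the same contraction estimate. Compactness of both $M$ and $N$ guarantees that all relevant norms are finite and that the ellipticity constant is uniform, so no boundary or decay issues intervene.

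The main obstacle is the symbol computation for the coupled system: one must verify carefully that introducing the harmonic map component $\tilde\phi$ and the pullback one-form $\tilde\omega=\tilde\phi^*\omega_N$ does not spoil the strict parabolicity produced by DeTurck's trick for the metric alone. Once it is checked that every coupling term is strictly lower order, so that the principal symbol remains block-diagonal and negative definite, the remainder of the argument is \emph{mutatis mutandis} the proof for the Ricci flow, with no new analytic difficulty arising from the coupling.
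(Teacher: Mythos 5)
Your proposal is correct and follows essentially the same route as the paper: write the modified Ricci-harmonic-DeTurck system in local coordinates, observe that the DeTurck term cancels the gauge degeneracy of the Ricci tensor while the coupling terms ($2\theta\tilde\omega\otimes\tilde\omega$ and $\mathcal{L}_{Z_t}\tilde\phi$) are of strictly lower order, conclude strict parabolicity, and invoke standard parabolic theory for short-time existence and uniqueness. The only cosmetic difference is that you phrase the computation via principal symbols and sketch the linearization/contraction-mapping step, whereas the paper displays the second-order coordinate expressions explicitly and cites the standard theory without elaboration.
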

\begin{proof} Let $(x_i)$ and $(y_\gamma)$ local coordinate systems for smooth manifolds $M^m$ and $N^n$, respectively. For simplicity, we will omit the time variable $t$. Now, we write the involved objects of modified Ricci-harmonic-DeTurck flow in local coordinates. We begin by Ricci tensor in the metric $\tilde{g}$.
\begin{align*}
Rc_{\tilde{g}}&=-\dfrac{1}{2}\sum_{i,j,k,l=1}^{m}\tilde{g}^{ij}\left(\dfrac{\partial^2\tilde{g}_{kl}}{\partial x_i \partial x_j}-\dfrac{\partial^2\tilde{g}_{kj}}{\partial x_i \partial x_l}-\dfrac{\partial^2\tilde{g}_{il}}{\partial x_k \partial x_j}+\dfrac{\partial^2\tilde{g}_{ij}}{\partial x_k \partial x_l}\right)dx_k\otimes dx_l\\
&\ \ \ \ + \ \mbox{lower order terms},
\end{align*}
where  "lower order terms" refers to order terms $0$ and $1$ with respect to partial derivatives of $\tilde{g}_{kl}$ and, after, with respect to partial derivatives of $\phi_\gamma$.
Moreover, we define the vector field $Z=\Delta_{\tilde{g},\bar{g}}\mathrm{id}_M$, i.e.,
\begin{equation*}
Z=\sum_{i,j,l=1}^{m}\tilde{g}^{ij}\left(\bar{\Gamma}_{ij}^l-\tilde{\Gamma}_{ij}^l\right)\partial_l,
\end{equation*}
where $\bar{\Gamma}$ e $\tilde{\Gamma}$ denote the Christoffel symbols associated with metrics $\bar{g}$ and $\tilde{g}$, respectively. Implying
\begin{align*}
Z &= -\dfrac{1}{2}\sum_{i,j,k,l=1}^{m}\tilde{g}^{ij}\tilde{g}^{kl}\left(\dfrac{\partial\tilde{g}_{kj}}{\partial x_i}+\dfrac{\partial\tilde{g}_{ik}}{\partial x_j}-\dfrac{\partial\tilde{g}_{ij}}{\partial x_k}\right)\partial_l\\
&\ \ \ \ +\dfrac{1}{2}\sum_{i,j,k,l=1}^{m}\tilde{g}^{ij}\bar{g}^{kl}\left(\dfrac{\partial\bar{g}_{kj}}{\partial x_i}+\dfrac{\partial\bar{g}_{ik}}{\partial x_j}-\dfrac{\partial\bar{g}_{ij}}{\partial x_k}\right)\partial_l.
\end{align*}
From this we deduce that
\begin{align*}
\mathcal{L}_Z\tilde{g}&=-\sum_{i,j,k,l=1}^{m}\tilde{g}^{ij}\left(\dfrac{\partial^2\tilde{g}_{kj}}{\partial x_i \partial x_l}+\dfrac{\partial^2\tilde{g}_{il}}{\partial x_k \partial x_j}-\dfrac{\partial^2\tilde{g}_{ij}}{\partial x_k \partial x_l}\right)dx_k\otimes dx_l\\
&\ \ \ \ + \ \mbox{lower order terms}.
\end{align*}
Since the $1$--forms $\tilde{\omega}:=\tilde{\phi}^*\omega_N$ when written in a local coodinate system are independent of choice of metric, we obtain
\begin{align*}
-2Rc_{\tilde{g}}+2\theta\tilde{\omega}\otimes\tilde{\omega}-\mathcal{L}_Z\tilde{g}\ \ &= \sum_{i,j,k,l=1}^{m}\tilde{g}^{ij}\dfrac{\partial^2\tilde{g}_{kl}}{\partial x_i \partial x_j}dx_k\otimes dx_l\\
&\ \ \ +\ \mbox{lower order terms}.
\end{align*}
Now for the second equation of the modified Ricci-harmonic-DeTurck flow we have
\begin{align*}
\Delta_{\tilde{g},h}\tilde{\phi}&=\sum_{i,j=1}^{m}\sum_{\gamma=1}^{n}\tilde{g}^{ij}\dfrac{\partial^2\tilde{\phi}_\gamma}{\partial x_i \partial x_j}\partial_\gamma|_{\tilde{\phi}} + \ \mbox{lower order terms},
\end{align*}
and, as
$\mathcal{L}_Z\tilde{\phi}=d\tilde{\phi}(Z)$ has only order terms $1$, it follows
\begin{align*}
\Delta_{\tilde{g},h}\tilde{\phi}-\mathcal{L}_Z\tilde{\phi}=\sum_{i,j=1}^{m}\sum_{\gamma=1}^{n}\tilde{g}^{ij}\dfrac{\partial^2\tilde{\phi}_\gamma}{\partial x_i \partial x_j}\partial_\gamma|_{\tilde{\phi}} + \ \mbox{lower order terms}.
\end{align*}

This way the modified Ricci-harmonic-DeTurck flow  is strictly parabolic. Therefore, the short-time existence and uniqueness of modified Ricci-harmonic-DeTurck flow follows by standard theory of parabolic systems.
\end{proof}

The next lemma is very useful to our intentions, since informs the comportament of tension field in the presence of diffeomorphisms.

\begin{lemma}\label{inv}
Let $\phi:(M^m,g)\to(N^n,h)$ be a smooth map between the Riemannian manifolds $(M,g)$ and $(N,h)$, and $\varphi:M\to M$ a diffeomorphism of $M.$ Then
\begin{equation*}
(\Delta_{\varphi^*(g),h}(\phi\circ\varphi))|_p = \left(\varphi^*\left(\Delta_{g,h}\phi\right)\right)|_p =
(\Delta_{g,h} \phi)|_{\varphi(p)}\in T_{\phi(\varphi(p))}N
\end{equation*}
for all $p\in M$.
\end{lemma}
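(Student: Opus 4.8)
The plan is to recognize $\varphi$ as an isometry from $(M,\varphi^*g)$ onto $(M,g)$ and then invoke the naturality of every ingredient in the definition of the tension field. Recall that the tension field is the $g$-trace of the generalized Hessian (second fundamental form) of $\phi$,
\[
\Delta_{g,h}\phi = \tr_g(\nabla d\phi),\qquad (\nabla d\phi)(X,Y)=\nabla^{\phi}_X\big(d\phi(Y)\big)-d\phi\big(\nabla^{g}_X Y\big),
\]
where $\nabla^{g}$ is the Levi-Civita connection of $g$ and $\nabla^{\phi}$ is the pullback to $\phi^*TN$ of the Levi-Civita connection $\nabla^{h}$ of $h$. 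First I would dispose of the second (right-hand) equality, which carries no geometric content: since $(\Delta_{g,h}\phi)|_q\in T_{\phi(q)}N$, the symbol $\varphi^*$ applied to this section of $\phi^*TN$ is by definition precomposition with $\varphi$, so $(\varphi^*(\Delta_{g,h}\phi))|_p=(\Delta_{g,h}\phi)|_{\varphi(p)}\in T_{\phi(\varphi(p))}N$, and this target space is precisely the fibre of $(\phi\circ\varphi)^*TN$ over $p$.

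The heart of the matter is the first (left-hand) equality. The key observation is that $\varphi\colon(M,\varphi^*g)\to(M,g)$ is an isometry by construction, and hence it intertwines the two Levi-Civita connections: for all vector fields $X,Y$,
\[
d\varphi\big(\nabla^{\varphi^*g}_X Y\big)=\nabla^{g}_{d\varphi(X)}\big(d\varphi(Y)\big).
\]
This follows from the uniqueness of the Levi-Civita connection, since the connection $\varphi^*\nabla^{g}$ is metric for $\varphi^*g$ and torsion-free. Combining this with the chain rule $d(\phi\circ\varphi)(X)=d\phi(d\varphi(X))$ and the naturality of the pullback connection $\nabla^{\phi\circ\varphi}=\varphi^*\nabla^{\phi}$ on $(\phi\circ\varphi)^*TN\cong\varphi^*(\phi^*TN)$, a direct computation of the two terms in the Hessian gives, for all $X,Y$,
\[
(\nabla d(\phi\circ\varphi))(X,Y)\big|_p=(\nabla d\phi)\big(d\varphi(X),d\varphi(Y)\big)\big|_{\varphi(p)}.
\]

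Finally I would take the $\varphi^*g$-trace at $p$. Choosing a $\varphi^*g$-orthonormal frame $\{\tilde e_i\}$ at $p$, its image $\{d\varphi(\tilde e_i)\}$ is a $g$-orthonormal frame at $\varphi(p)$ because $\varphi$ is an isometry. Summing the displayed identity over this frame yields
\[
\Delta_{\varphi^*g,h}(\phi\circ\varphi)\big|_p=\sum_i(\nabla d\phi)\big(d\varphi(\tilde e_i),d\varphi(\tilde e_i)\big)\big|_{\varphi(p)}=\tr_g(\nabla d\phi)\big|_{\varphi(p)}=(\Delta_{g,h}\phi)\big|_{\varphi(p)},
\]
which is exactly the claimed identity.

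The main obstacle is purely bookkeeping: one must keep straight the three distinct pullback operations in play, namely the pullback of the metric tensor $\varphi^*g$, the pullback of the connections $\varphi^*\nabla^{g}$ and $\varphi^*\nabla^{\phi}$, and the pullback of sections of $\phi^*TN$, and verify that they are mutually compatible. The one genuinely geometric input is the isometry-invariance of the Levi-Civita connection; everything else is the chain rule and the linear algebra of traces. Alternatively, the whole lemma can be checked by a brute-force computation in the local coordinates of the preceding proposition, using $(\varphi^*g)_{ij}=g_{kl}(\partial_i\varphi^k)(\partial_j\varphi^l)$ together with the transformation law for the Christoffel symbols, but the invariant argument above is cleaner and makes the role of the isometry transparent.
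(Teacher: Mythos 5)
Your proof is correct. A point of comparison worth noting: the paper itself offers no proof of Lemma~\ref{inv} at all --- it is stated bare and immediately invoked in Propositions~\ref{tFRDeT.tFR} and~\ref{tFR.tFRDeT}, the author evidently regarding it as a standard naturality fact (it appears in this role in M\"uller's and Brendle's treatments of the DeTurck trick). So your argument is not an alternative route but a filling-in of what the paper leaves implicit, and it is the right one: viewing $\varphi$ as an isometry $(M,\varphi^*g)\to(M,g)$, using uniqueness of the Levi-Civita connection to get $d\varphi\big(\nabla^{\varphi^*g}_X Y\big)=\nabla^g_{d\varphi(X)}d\varphi(Y)$, invoking naturality of the pullback connection on $(\phi\circ\varphi)^*TN\cong\varphi^*(\phi^*TN)$ to identify the two Hessians, and then tracing over a $\varphi^*g$-orthonormal frame whose $d\varphi$-image is $g$-orthonormal. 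Each ingredient you cite is correct, and the second equality is indeed pure bookkeeping (pullback of a section of $\phi^*TN$ is precomposition). The only step you compress is the ``direct computation'' identifying $(\nabla d(\phi\circ\varphi))(X,Y)|_p$ with $(\nabla d\phi)(d\varphi X,d\varphi Y)|_{\varphi(p)}$; writing out the two terms (using that $d(\phi\circ\varphi)(Y)$ is the $\varphi$-pullback of the section $d\phi(\varphi_*Y)$, and that the pullback connection differentiates pulled-back sections along pushed-forward directions) would make the proof self-contained, but the claim and its justification are sound as stated.
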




From now on, for the next results of this section, we will assume that $(M^m,\bar{g})$ and $(N^n,h)$ are two fixed compact Riemannian manifolds and a fixed $1$--form $\omega_N$ in $N$.

The following propositions ensure the correspondence between the solutions of both flows, which will be useful to verify the short-time existence and uniqueness of the modified Ricci-harmonic flow.

\begin{proposition}\label{tFRDeT.tFR}
Suppose the $(\tilde{g}(t),\tilde{\phi}(t))$, $t\in[0,\varepsilon)$, is a solution of Ricci-harmonic-DeTurck flow in $M$. Moreover, let $\varphi_t$, $t\in[0,\varepsilon)$ be a one-parameter family of diffeomorphism of $M$ satisfying
\begin{equation*}
\dfrac{\partial}{\partial t}\varphi_t(p)=Z_t|_{\varphi_t(p)}
\end{equation*}
for all $p\in M$ and all $t\in[0,\varepsilon)$. Then $(g(t),\phi(t))$, $t\in[0,\varepsilon)$, with $g(t):=\varphi^*_t(\tilde{g}(t))$ and $\phi(t):=\varphi^*_t(\tilde{\phi}(t))$, form a solution of modified Ricci-harmonic flow.
\end{proposition}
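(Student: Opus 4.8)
The plan is to differentiate the two defining relations $g(t)=\varphi_t^*\tilde{g}(t)$ and $\phi(t)=\tilde{\phi}(t)\circ\varphi_t$ in $t$ and to use the DeTurck equations \eqref{FRDTm} to convert the Lie-derivative terms into exactly the right-hand sides demanded by the flow \eqref{eq.tFR}. The single analytic input is the standard formula for the time derivative of the pullback of a time-dependent tensor field along a time-dependent diffeomorphism,
\[
\frac{\partial}{\partial t}\varphi_t^*(T_t)=\varphi_t^*\Big(\frac{\partial T_t}{\partial t}+\mathcal{L}_{Z_t}T_t\Big),
\]
which is valid precisely because $Z_t$ is by hypothesis the generating vector field of $\varphi_t$, i.e. $\tfrac{\partial}{\partial t}\varphi_t(p)=Z_t|_{\varphi_t(p)}$.

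First I would handle the metric equation. Applying the displayed formula with $T_t=\tilde{g}(t)$ and inserting the first line of \eqref{FRDTm}, the term $-\mathcal{L}_{Z_t}\tilde{g}(t)$ cancels against $+\mathcal{L}_{Z_t}\tilde{g}(t)$, leaving
\[
\frac{\partial}{\partial t}g(t)=\varphi_t^*\big(-2Ric_{\tilde{g}(t)}+2\theta\,\tilde{\omega}(t)\otimes\tilde{\omega}(t)\big).
\]
Then I would invoke the diffeomorphism naturality of the Ricci tensor, $\varphi_t^*Ric_{\tilde{g}(t)}=Ric_{\varphi_t^*\tilde{g}(t)}=Ric_{g(t)}$, together with the identity $\varphi_t^*\tilde{\omega}(t)=\omega(t)$. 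The latter is pure functoriality of the pullback of forms: since $\tilde{\omega}(t)=\tilde{\phi}(t)^*\omega_N$ and $\phi(t)=\tilde{\phi}(t)\circ\varphi_t$, one has $\omega(t)=\phi(t)^*\omega_N=\varphi_t^*\tilde{\phi}(t)^*\omega_N=\varphi_t^*\tilde{\omega}(t)$. Together with $\varphi_t^*(\tilde{\omega}(t)\otimes\tilde{\omega}(t))=\omega(t)\otimes\omega(t)$, this produces the first line of \eqref{eq.tFR}.

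Next I would treat the map equation. Differentiating $\phi(t)=\tilde{\phi}(t)\circ\varphi_t$ at a point $p$ by the chain rule yields two contributions: the partial time derivative of $\tilde{\phi}$ evaluated at $\varphi_t(p)$, and $d\tilde{\phi}(t)$ applied to $\partial_t\varphi_t(p)=Z_t|_{\varphi_t(p)}$. Recognizing the second contribution as $(\mathcal{L}_{Z_t}\tilde{\phi}(t))|_{\varphi_t(p)}$ (recall $\mathcal{L}_{Z_t}\tilde{\phi}(t)=d\tilde{\phi}(t)(Z_t)$, as used in Proposition~\ref{exs.tFRDet}) and substituting the second line of \eqref{FRDTm}, the Lie-derivative terms again cancel and one is left with $\partial_t\phi(t)|_p=(\Delta_{\tilde{g}(t),h}\tilde{\phi}(t))|_{\varphi_t(p)}$. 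Finally I would apply Lemma~\ref{inv} with $g\mapsto\tilde{g}(t)$, $\phi\mapsto\tilde{\phi}(t)$ and $\varphi\mapsto\varphi_t$, which identifies this expression with $(\Delta_{\varphi_t^*\tilde{g}(t),h}(\tilde{\phi}(t)\circ\varphi_t))|_p=(\Delta_{g(t),h}\phi(t))|_p$, giving the second line of \eqref{eq.tFR}.

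Each computation is routine once the generating-field formula is in place; the point that genuinely requires care is the bookkeeping of base points (the distinction between $p$ and $\varphi_t(p)$) in the map equation, since $\varphi_t$ moves in time and the tension field is a section of the bundle pulled back along the map rather than an ordinary tensor on $M$. Lemma~\ref{inv} is precisely the device that transports the tension field correctly under the moving diffeomorphism, so the main conceptual obstacle has already been isolated and resolved; everything else is cancellation of the two deliberately introduced $\mathcal{L}_{Z_t}$ terms.
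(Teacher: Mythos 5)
Your proof is correct and follows essentially the same route as the paper: differentiate the pullbacks via the generating-field formula, cancel the $\mathcal{L}_{Z_t}$ terms against the DeTurck equations, use diffeomorphism naturality of the Ricci tensor, and invoke Lemma~\ref{inv} for the tension field. The only (harmless) difference is that you derive $\omega(t)=\varphi_t^*\tilde{\omega}(t)$ from functoriality of pullbacks, whereas the paper simply takes it as the definition of $\omega(t)$; your version makes the consistency with $\omega(t)=\phi(t)^*\omega_N$ explicit.
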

\begin{proof} First define $\omega(t):=\varphi_t^*(\tilde{\omega}(t)).$ Since holds the identities  $g(t)=\varphi^*_t(\tilde{g}(t))$ and $\phi(t)=\varphi^*_t(\tilde{\phi}(t))$, we obtain
\begin{align*}
\dfrac{\partial}{\partial t}g(t) &= \dfrac{\partial}{\partial t}(\varphi^*_t\tilde{g}(t)) = \varphi^*_t\left(\dfrac{\partial}{\partial t}\tilde{g}(t)+\mathcal{L}_{Z_t}\tilde{g}(t)\right)\\
&=\varphi^*_t\left(-2Rc_{\tilde{g}(t)}+2\theta\tilde{\omega}(t)\otimes\tilde{\omega}(t)\right)\\
&=-2Rc_{g(t)} + 2\theta\omega(t)\otimes\omega(t),
\end{align*}
as well as
\begin{align*}
\dfrac{\partial}{\partial t}\phi(t) &= \dfrac{\partial}{\partial t}(\varphi^*_t\tilde{\phi}(t)) = \varphi^*_t\left(\dfrac{\partial}{\partial t}\tilde{\phi}(t)+\mathcal{L}_{Z_t}\tilde{\phi}(t)\right)\\
&= \varphi^*_t\left(\Delta_{\tilde{g}(t),h}\tilde{\phi}(t)\right) = \Delta_{g(t),h}\phi(t),
\end{align*}
where in the last equality we use the Lemma \ref{inv}.
	
Thus $(g(t),\phi(t))$, $t\in[0,\varepsilon)$, is a solution of modified Ricci-harmonic flow, since $(\tilde{g}(t),\tilde{\phi}(t))$, $t\in[0,\varepsilon)$, is a solution of modified Ricci-harmonic-DeTurck flow.
\end{proof}

\begin{proposition}\label{tFR.tFRDeT}
Suppose the $(g(t),\phi(t))$, $t\in[0,\varepsilon)$ is a solution of modified Ricci-harmonic flow. Moreover, suppose the $\varphi_t$, $t\in[0,\varepsilon)$, is a one-parameter family of diffeomorphisms of $M$ evolving under the map heat flow
\begin{equation*}
\dfrac{\partial}{\partial t}\varphi_t=\Delta_{g(t),\bar{g}}\varphi_t.
\end{equation*}
For each $t\in[0,\varepsilon),$ we define $(\tilde{g}(t),\tilde{\phi}(t))$, a Riemannian metric $\tilde{g}(t)$ and a smooth map $\tilde{\phi}(t)$, by $\varphi^*_t(\tilde{g}(t))=g(t)$ and $\varphi^*_t(\tilde{\phi}(t))=\phi(t)$. Then $(\tilde{g}(t),\tilde{\phi}(t))$, $t\in[0,\varepsilon),$ is a solution of modified Ricci-harmonic-DeTurck flow. Moreover,
\begin{equation}
\dfrac{\partial}{\partial t}\varphi_t(p)=Z_t|_{\varphi_t(p)}
\end{equation}
for all $p\in M$ and all $t\in[0,\varepsilon)$.
\end{proposition}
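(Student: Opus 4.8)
The plan is to run the DeTurck correspondence of Proposition~\ref{tFRDeT.tFR} backwards. Since each $\varphi_t$ is a diffeomorphism, the relations $\varphi_t^*(\tilde{g}(t))=g(t)$ and $\varphi_t^*(\tilde{\phi}(t))=\phi(t)$ determine $\tilde{g}(t)$ and $\tilde{\phi}(t)$ uniquely and smoothly in $t$ (explicitly $\tilde{g}(t)=(\varphi_t^{-1})^*g(t)$ and $\tilde{\phi}(t)=\phi(t)\circ\varphi_t^{-1}$), and hence $Z_t=\Delta_{\tilde{g}(t),\bar{g}}\mathrm{id}_M$ is determined as well. I emphasize at the outset that this introduces no circularity: at each fixed time $t$ the field $Z_t$ is an algebraic function of the time-$t$ data, not the solution of a differential equation.

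First I would establish the displayed ``Moreover'' identity, since it drives everything else. Writing $\varphi_t=\mathrm{id}_M\circ\varphi_t$ and viewing $\mathrm{id}_M$ as a map $(M,\tilde{g}(t))\to(M,\bar{g})$, whose tension field is precisely $Z_t$, I would apply Lemma~\ref{inv} with $g=\tilde{g}(t)$, $h=\bar{g}$, $\phi=\mathrm{id}_M$ and $\varphi=\varphi_t$. Using $\varphi_t^*(\tilde{g}(t))=g(t)$ this gives $(\Delta_{g(t),\bar{g}}\varphi_t)|_p=Z_t|_{\varphi_t(p)}$. Comparing with the map heat flow hypothesis $\partial_t\varphi_t=\Delta_{g(t),\bar{g}}\varphi_t$ then yields $\partial_t\varphi_t(p)=Z_t|_{\varphi_t(p)}$, which is the second assertion and, crucially, identifies $Z_t$ as the vector field generating the flow $\varphi_t$.

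Next I would obtain the metric equation of \eqref{FRDTm}. Differentiating $g(t)=\varphi_t^*(\tilde{g}(t))$ with the transport formula $\partial_t(\varphi_t^*T)=\varphi_t^*(\partial_tT+\mathcal{L}_{Z_t}T)$---now legitimate since $Z_t$ generates $\varphi_t$---and inserting the modified Ricci-harmonic flow $\partial_tg(t)=-2Ric_{g(t)}+2\theta\,\omega(t)\otimes\omega(t)$ together with the naturality $Ric_{g(t)}=\varphi_t^*(Ric_{\tilde{g}(t)})$ and the pullback identity $\omega(t)=\varphi_t^*(\tilde{\omega}(t))$ (which follows from $\phi(t)=\tilde{\phi}(t)\circ\varphi_t$ and $\tilde{\omega}(t)=\tilde{\phi}(t)^*\omega_N$), I arrive at $\varphi_t^*(\partial_t\tilde{g}(t)+\mathcal{L}_{Z_t}\tilde{g}(t))=\varphi_t^*(-2Ric_{\tilde{g}(t)}+2\theta\,\tilde{\omega}(t)\otimes\tilde{\omega}(t))$. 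Cancelling the injective $\varphi_t^*$ gives $\partial_t\tilde{g}(t)=-2Ric_{\tilde{g}(t)}+2\theta\,\tilde{\omega}(t)\otimes\tilde{\omega}(t)-\mathcal{L}_{Z_t}\tilde{g}(t)$, exactly the first line of \eqref{FRDTm}.

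For the map equation I would differentiate $\phi(t)=\tilde{\phi}(t)\circ\varphi_t$ by the chain rule, substitute $\partial_t\varphi_t=Z_t|_{\varphi_t}$, and use $d\tilde{\phi}(t)(Z_t)=\mathcal{L}_{Z_t}\tilde{\phi}(t)$ (the same convention for maps used in the proof of Proposition~\ref{exs.tFRDet}) to get $\partial_t\phi(t)=\big(\partial_t\tilde{\phi}(t)+\mathcal{L}_{Z_t}\tilde{\phi}(t)\big)\circ\varphi_t$. On the other hand, the modified Ricci-harmonic flow and a second application of Lemma~\ref{inv} give $\partial_t\phi(t)=\Delta_{g(t),h}\phi(t)=(\Delta_{\tilde{g}(t),h}\tilde{\phi}(t))\circ\varphi_t$. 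Since $\varphi_t$ is surjective these agree at every point, yielding $\partial_t\tilde{\phi}(t)=\Delta_{\tilde{g}(t),h}\tilde{\phi}(t)-\mathcal{L}_{Z_t}\tilde{\phi}(t)$, the second line of \eqref{FRDTm}. The main obstacle is conceptual rather than computational: recognizing that the map heat flow for $\varphi_t$ is precisely the DeTurck generating condition $\partial_t\varphi_t=Z_t|_{\varphi_t}$ through Lemma~\ref{inv}, and confirming the absence of circularity. Once this identification is secured, the remainder is the pullback bookkeeping of Proposition~\ref{tFRDeT.tFR} read in reverse.
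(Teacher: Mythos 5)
Your proposal is correct and follows essentially the same route as the paper: first identify $\partial_t\varphi_t = Z_t|_{\varphi_t}$ via Lemma~\ref{inv} applied to $\mathrm{id}_M$ and the relation $\varphi_t^*(\tilde{g}(t))=g(t)$, then derive both equations of \eqref{FRDTm} by the pullback transport formula, naturality of $Ric$, the identity $\omega(t)=\varphi_t^*(\tilde{\omega}(t))$, and a second application of Lemma~\ref{inv}, cancelling $\varphi_t^*$ at the end. Your added remarks (non-circularity of defining $Z_t$, deriving rather than decreeing the relation $\omega(t)=\varphi_t^*(\tilde\omega(t))$ from $\tilde\omega(t)=\tilde\phi(t)^*\omega_N$) only make explicit points the paper leaves implicit.
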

\begin{proof}
Using Lemma \ref{inv}, we get
\begin{equation*}
\dfrac{\partial}{\partial t}\varphi_t(p)=(\Delta_{g(t),\bar{g}}\varphi_t)|_p =
(\Delta_{\varphi^*_t(\tilde{g}(t)),\bar{g}}\varphi_t)|_p = (\Delta_{\tilde{g}(t),\bar{g}}\mathrm{id}_M)|_{\varphi_t(p)} = Z_t|_{\varphi_t(p)}
\end{equation*}
for all $p\in M$ and all $t\in[0,\varepsilon)$. Now, we define $\tilde{\omega}(t)$ by $\varphi_t^*(\tilde{\omega}(t)):=\omega(t)$. Since $\varphi^*_t(\tilde{g}(t))=g(t)$ e $\varphi^*_t(\tilde{\phi}(t))=\phi(t)$, it follows that
\begin{align*}
\varphi^*_t\left(\dfrac{\partial}{\partial t}\tilde{g}(t)+\mathcal{L}_{Z_t}\tilde{g}(t)\right) &= \dfrac{\partial}{\partial t}g(t) = -2Rc_{g(t)}+2\theta\omega(t)\otimes\omega(t)\\
&= \varphi^*_t\left(-2Rc_{\tilde{g}(t)}+2\theta\tilde{\omega}(t)\otimes\tilde{\omega}(t)\right)
\end{align*}
which implies
\begin{equation*}
\varphi^*_t\left(\dfrac{\partial}{\partial t}\tilde{g}(t)+2Rc_{\tilde{g}(t)}-2\theta\tilde{\omega}(t)\otimes\tilde{\omega}(t)+\mathcal{L}_{Z_t}\tilde{g}(t)\right) = 0,
\end{equation*}
and also
\begin{align*}
\varphi^*_t\left(\dfrac{\partial}{\partial t}\tilde{\phi}(t)+\mathcal{L}_{Z_t}\tilde{\phi}(t)\right) &= \dfrac{\partial}{\partial t}\phi(t) = \Delta_{g(t),h}\phi(t) = \varphi^*_t\left(\Delta_{\tilde{g}(t),h}\tilde{\phi}(t)\right),
\end{align*}
implies
\begin{equation*}
\varphi^*_t\left(\dfrac{\partial}{\partial t}\tilde{\phi}(t)-\Delta_{\tilde{g}(t),h}\tilde{\phi}(t)+\mathcal{L}_{Z_t}\tilde{\phi}(t)\right)=0.
\end{equation*}
Thus, $(\tilde{g}(t),\tilde{\phi}(t))$, $t\in[0,\varepsilon)$, is a solution of modified Ricci-harmonic-DeTurck flow, whenever $(g(t),\phi(t))$, $t\in[0,\varepsilon)$, for a solution of modified Ricci-harmonic flow.
\end{proof}

Finally, we are able to show the short-time existence and uniqueness theorem for modified Ricci-harmonic flow, which is analogous to Hamilton Theorem in \cite{hamilton1} refers to the Ricci flow.

\begin{theorem}[Existence and Uniqueness]\label{1STEUMRF}
There exist a real number $\varepsilon>0$ and one-parameter families of Riemannian metrics $g(t)$ and smooth maps $\phi(t)$, $t\in[0,\varepsilon)$, such that $(g(t),\phi(t))$, $t\in[0,\varepsilon)$, is a solution of modified Ricci-harmonic flow with $g(0)=g$ and $\phi(0)=\phi$. Moreover, the solution $(g(t),\phi(t))$, $t\in[0,\varepsilon)$, is unique.
\end{theorem}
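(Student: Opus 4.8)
The plan is to reduce both assertions for the weakly parabolic modified Ricci-harmonic flow to the strictly parabolic modified Ricci-harmonic-DeTurck flow, exactly as in Hamilton's treatment of the Ricci flow, using the correspondence already established in Propositions \ref{tFRDeT.tFR} and \ref{tFR.tFRDeT} and the invariance in Lemma \ref{inv}. All the geometric content is packaged in those results; what remains is careful bookkeeping of a family of diffeomorphisms.

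For existence, I would first invoke Proposition \ref{exs.tFRDet} to produce, on some interval $[0,\varepsilon)$, a solution $(\tilde{g}(t),\tilde{\phi}(t))$ of the modified Ricci-harmonic-DeTurck flow with initial data $(g,\phi)$; this is precisely where strict parabolicity is needed. Next I would solve the ordinary differential equation $\frac{\partial}{\partial t}\varphi_t(p)=Z_t|_{\varphi_t(p)}$ with $\varphi_0=\mathrm{id}_M$, where $Z_t=\Delta_{\tilde{g}(t),\bar{g}}\mathrm{id}_M$ is the DeTurck vector field determined by $\tilde{g}(t)$. Since $M$ is compact and $Z_t$ depends smoothly on $t$, this time-dependent flow exists on $[0,\varepsilon)$ (shrinking $\varepsilon$ if necessary) and each $\varphi_t$ is a diffeomorphism with $\varphi_0=\mathrm{id}_M$. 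Proposition \ref{tFRDeT.tFR} then guarantees that $(g(t),\phi(t)):=(\varphi_t^*\tilde{g}(t),\varphi_t^*\tilde{\phi}(t))$ solves the modified Ricci-harmonic flow, and because $\varphi_0=\mathrm{id}_M$ the initial data is $(\tilde{g}(0),\tilde{\phi}(0))=(g,\phi)$, as required.

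For uniqueness, suppose $(g(t),\phi(t))$ is any solution with $g(0)=g$ and $\phi(0)=\phi$. I would run the harmonic map heat flow $\frac{\partial}{\partial t}\varphi_t=\Delta_{g(t),\bar{g}}\varphi_t$ with $\varphi_0=\mathrm{id}_M$; its short-time solvability follows from the parabolicity of that flow, and since $\varphi_0=\mathrm{id}_M$ is a diffeomorphism, $\varphi_t$ remains a diffeomorphism for small $t$. Defining $\tilde{g}(t),\tilde{\phi}(t)$ by $\varphi_t^*\tilde{g}(t)=g(t)$ and $\varphi_t^*\tilde{\phi}(t)=\phi(t)$, Proposition \ref{tFR.tFRDeT} shows that $(\tilde{g}(t),\tilde{\phi}(t))$ solves the DeTurck flow and that $\varphi_t$ in fact satisfies $\frac{\partial}{\partial t}\varphi_t(p)=Z_t|_{\varphi_t(p)}$. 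Now I would combine the two uniqueness statements: by the uniqueness part of Proposition \ref{exs.tFRDet} the DeTurck solution with initial data $(g,\phi)$ is unique, so the vector field $Z_t$ is uniquely determined; the diffeomorphisms $\varphi_t$ then solve the ODE $\frac{\partial}{\partial t}\varphi_t=Z_t\circ\varphi_t$ with $\varphi_0=\mathrm{id}_M$ and a uniquely determined right-hand side, hence are unique by ODE uniqueness. Therefore $(g(t),\phi(t))=(\varphi_t^*\tilde{g}(t),\varphi_t^*\tilde{\phi}(t))$ is forced, giving uniqueness.

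I expect the main obstacle to lie in the uniqueness argument rather than in existence. One must genuinely realise an arbitrary given solution through the harmonic map heat flow started at $\mathrm{id}_M$, which requires both short-time existence for that flow and the persistence of the diffeomorphism property, and then track that the DeTurck vector field $Z_t$ extracted from the unique DeTurck solution is exactly the field driving the ODE for $\varphi_t$, so that ODE uniqueness closes the loop. The existence half, by contrast, is essentially immediate once the DeTurck solution and its integrating diffeomorphisms are in hand.
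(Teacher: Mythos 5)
Your existence argument is exactly the paper's: Proposition \ref{exs.tFRDet} gives the DeTurck solution, the ODE $\frac{\partial}{\partial t}\varphi_t(p)=Z_t|_{\varphi_t(p)}$ with $\varphi_0=\mathrm{id}_M$ is integrated (on compact $M$ this poses no problem), and Proposition \ref{tFRDeT.tFR} converts the pulled-back pair into a solution of the modified Ricci-harmonic flow with the right initial data. No issues there.

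The uniqueness half has a genuine gap. Your argument runs the map heat flow $\frac{\partial}{\partial t}\varphi_t=\Delta_{g(t),\bar{g}}\varphi_t$ from $\varphi_0=\mathrm{id}_M$, and, as you yourself note, this flow is only guaranteed to exist and to consist of diffeomorphisms on some short interval $[0,\tilde{\varepsilon})$, where possibly $\tilde{\varepsilon}<\varepsilon$; there is no a priori control ensuring $\varphi_t$ survives, or remains a diffeomorphism, up to time $\varepsilon$. Consequently the forced identity $(g(t),\phi(t))=(\varphi_t^*\tilde{g}(t),\varphi_t^*\tilde{\phi}(t))$, and hence the agreement of two given solutions, is only established on $[0,\tilde{\varepsilon})$, whereas the theorem asserts uniqueness on all of $[0,\varepsilon)$. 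Upgrading short-time agreement to agreement on the full interval requires a continuation argument, and this is precisely how the paper structures its proof: assuming two solutions differ somewhere, it sets $\delta=\inf\{t\in[0,\varepsilon):(g^1(t),\phi^1(t))\neq(g^2(t),\phi^2(t))\}$, notes that the solutions agree at $t=\delta$ by continuity, restarts both map heat flows at time $\delta$ with initial condition $\varphi^1_\delta=\varphi^2_\delta=\mathrm{id}_M$, and then applies Proposition \ref{tFR.tFRDeT}, the uniqueness in Proposition \ref{exs.tFRDet} (with common initial data at time $\delta$), and ODE uniqueness on $[\delta,\delta+\tilde{\varepsilon})$ to contradict the definition of $\delta$. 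Your core mechanism (map heat flow $\to$ DeTurck uniqueness $\to$ ODE uniqueness) is exactly the engine of that argument, but without the infimum-and-contradiction wrapper it proves only local-in-time uniqueness; adding that wrapper would complete your proof and make it coincide with the paper's.
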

\begin{proof}
Firstly we prove the existence of solution. By Proposition \ref{exs.tFRDet}, there exist a solution $(\tilde{g}(t),\tilde{\phi}(t))$ of modified Ricci-harmonic-DeTurck flow which is defined in some interval $[0,\varepsilon)$ with $\tilde{g}(0)=g$ and $\tilde{\phi}(0)=\phi$. Consequently, we get
\begin{equation*}
\begin{cases}
\dfrac{\partial}{\partial t}\tilde{g}(t)=-2Rc_{\tilde{g}(t)}+2\theta\tilde{\omega}(t)\otimes\tilde{\omega}(t)-\mathcal{L}_{Z_t}\tilde{g}(t)\\
\dfrac{\partial}{\partial t}\tilde{\phi}(t)=\Delta_{\tilde{g}(t),h}\tilde{\phi}(t)-\mathcal{L}_{Z_t}\tilde{\phi}(t),
\end{cases}
\end{equation*}
where $Z_t=\Delta_{\tilde{g}(t),\bar{g}}\mathrm{id}_{M}$. For each $p\in M$, we denote by $\varphi_t(p)$ the solution of the ODE
\begin{equation*}
\dfrac{\partial}{\partial t}\varphi_t(p)=Z_t|_{\varphi_t(p)}
\end{equation*}
with initial condition $\varphi_0(p)=p$. By Proposition \ref{tFRDeT.tFR}, the metrics $g(t)=\varphi^*_t(\tilde{g}(t))$ and maps $\phi(t)=\varphi^*_t(\tilde{\phi}(t))$, $t\in[0,\varepsilon)$, form a solution of the modified Ricci-harmonic flow with $g(0)=g$ and $\phi(0)=\phi$.
	
Now, we prove the uniqueness of our solution. Suppose that
$(g^i(t),\phi^i(t))$, $i=1,2$, are two solutions to the modified Ricci-harmonic flow which are defined on some time interval $[0,\varepsilon)$, satisfying $g^1(0)=g^2(0)$ and $\phi^1(0)=\phi^2(0)$. We want to deduce that  $(g^1(t),\phi^1(t))=(g^2(t),\phi^2(t))$ for all $t\in[0,\varepsilon)$. In order to prove this, we argue by contradiction. Suppose that $(g^1(t),\phi^1(t))\neq(g^2(t),\phi^2(t))$ for some $t\in[0,\varepsilon)$. We define a real number $\delta\in[0,\varepsilon)$ by
\begin{equation*}
\delta=\inf{t\in[0,\varepsilon)}\big\{(g^1(t),\phi^1(t))\neq(g^2(t),\phi^2(t))\big\}.
\end{equation*}
Clearly, $(g^1(\delta),\phi^1(\delta))=(g^2(\delta),\phi^2(\delta))$. Let $\varphi^1_t$ be the solution of map heat flow $\varphi^1_t$
\begin{equation*}
\dfrac{\partial}{\partial t}\varphi^1_t=\Delta_{g^1(t),\bar{g}}\varphi^1_t
\end{equation*}
with initial condition $\varphi^1_\delta=\mathrm{id}_{M}$. Similarly, we denote by $\varphi^2_t$ the solution of map heat flow $\varphi^2_t$
\begin{equation*}
\dfrac{\partial}{\partial t}\varphi^2_t=\Delta_{g^2(t),\bar{g}}\varphi^2_t
\end{equation*}
with initial condition $\varphi^2_\delta=\mathrm{id}_{M}$. It follows from standard parabolic theory that $\varphi^1_t$ and $\varphi^2_t$ are defined on some time interval $[\delta,\delta+\tilde{\varepsilon})$, where $\tilde{\varepsilon}$ is a positive real number. Moreover, if we choose $\tilde{\varepsilon}>0$ small enough, then $\varphi^1_t:M\to M$ and $\varphi^2_t:M\to M$ are diffeomorphism for all $t\in[\delta,\delta+\tilde{\varepsilon})$.
	
For each $t\in[\delta,\delta+\tilde{\varepsilon})$, we define two Riemannian metrics $\tilde{g}^i(t)$ in $M$, $i=1,2$, by $(\varphi^i_t)^*(\tilde{g}^i(t)):=g^i(t)$ and also the smooth maps $\tilde{\phi}^i(t)$ of $M$ in $(N,h)$, $i=1,2$, by $(\varphi^i_t)^*(\tilde{\phi}^i(t)):=\phi^i(t)$. It follows from Proposition \ref{tFR.tFRDeT} that $(\tilde{g}^1(t),\tilde{\phi}^1(t))$ and $(\tilde{g}^2(t),\tilde{\phi}^2(t))$ are two solutions of the modified Ricci-harmonic-DeTurck flow. Since $(\tilde{g}^1(\delta),\tilde{\phi}^1(\delta))=(\tilde{g}^2(\delta),\tilde{\phi}^2(\delta))$, the uniqueness statement in Proposition \ref{exs.tFRDet} implies that $(\tilde{g}^1(t),\tilde{\phi}^1(t))=(\tilde{g}^2(t),\tilde{\phi}^2(t))$ for all $t\in[\delta,\delta+\tilde{\varepsilon})$. For each  $t\in[\delta,\delta+\tilde{\varepsilon})$, we define a vector field $Z_t$ in $M$ by
\begin{equation*}
Z_t=\Delta_{\tilde{g}^1(t),\bar{g}}\mathrm{id}_{M}=\Delta_{\tilde{g}^2(t),\bar{g}}\mathrm{id}_{M}.
\end{equation*}
Finally, by Proposition \ref{tFR.tFRDeT}, we have
\begin{equation*}
\dfrac{\partial}{\partial t}\varphi^1_t(p)=Z_t|_{\varphi^1_t(p)}
\end{equation*}
and
\begin{equation*}
\dfrac{\partial}{\partial t}\varphi^2_t(p)=Z_t|_{\varphi^2_t(p)}
\end{equation*}
for all $p\in M$ and all $t\in[\delta,\delta+\tilde{\varepsilon})$. Since $\varphi^1_\delta=\varphi^2_\delta=\mathrm{id}_{M}$, it follows that $\varphi^1_t=\varphi^2_t$ for all $t\in[\delta,\delta+\tilde{\varepsilon})$. Putting these facts together, we conclude that
\begin{equation*}
g^1(t)=(\varphi^1_t)^*(\tilde{g}^1(t))=(\varphi^2_t)^*(\tilde{g}^2(t))=g^2(t)
\end{equation*}
and
\begin{equation*}
\phi^1(t)=(\varphi^1_t)^*(\tilde{\phi}^1(t))=(\varphi^2_t)^*(\tilde{\phi}^2(t))=\phi^2(t).
\end{equation*}
This  contradicts the definition of $\delta$.
\end{proof}

\subsection{Modified Ricci-harmonic soliton}

In this section we characterized a modified Ricci soliton as part of a solution of modified Ricci-harmonic flow, being this, the second motivation for the introduction of modified Ricci solitons. In addition, it represents a new characterization for the $m$--quasi-Einstein metrics.

We will admit here that the solution exists and is unique in short time (fact proven in the previous section for the case where $M^m$ and $N^n$ are compact).  In this way the modified Ricci solitons are incorporated in the definition of modified Ricci-harmonic soliton (cf. Definition \ref{Msoliton}) and, by Proposition \ref{equiv}, we deduce that this last concept is equivalent to a special solution of the modified Ricci-harmonic flow (cf. Definition \ref{spc}).

\begin{definition}\label{Msoliton}
A modified Ricci-harmonic soliton $(M^m,g,\phi)$ is a  modified Ricci soliton $(M^m,g,X,\omega)$ provided with a smooth map $\phi:(M^m,g)\to (N^n,h)$, for some fixed Riemannian manifold $(N,h)$ and a fixed $1$--form $\omega_N$ on $N$ satisfying
\begin{equation}\label{phi0}
\begin{cases}
\phi^*\omega_N=\omega,\\
\Delta_{g,h}\phi=\mathcal{L}_X\phi.
\end{cases}
\end{equation}
\end{definition}

We know that a special solution (self-similar solution) $g(t)$, $t\in[0,\varepsilon)$, of Ricci flow is invariant for scaling of positive constants $c(t)$ and diffeomorphisms $\psi_t$ of $M$ which vary smoothly for one temporal parameter $t$ of an initial metric $g$, i.e., $g(t):=c(t)\psi_t^*g$. Therefore, it is reasonable that the family of maps $\phi(t):M\to N$ be defined by $\phi(t):=\psi^*_t\phi$, for an initial map $\phi:(M,g)\to(N,h)$.

\begin{definition}\label{spc}
We say that $(g(t),\phi(t))$, $t\in[0,\varepsilon)$, is a special solution of modified Ricci-harmonic flow if there exist a one-parameter family of diffeomorphism $\psi_t$ of $M^n$ satisfying $\psi_0=\mathrm{id}_M$ and positive scalars $c(t)$ varying smoothly in $t$, such that
\begin{equation}\label{selfsim}
\begin{cases}
g(t)=c(t)\psi^*_tg,\\
\phi(t)=\psi^*_t\phi=\phi\circ\psi_t.
\end{cases}
\end{equation}
\end{definition}

Inspired by Lemma $2.4$ in \cite{TRF:AI}, referring to the Ricci solitons, we similarly obtain the relation between a modified Ricci-harmonic soliton and a special solution of the modified Ricci-harmonic flow.


\begin{proposition}\label{equiv}
Let $(g(t),\phi(t))$, $t\in[0,\varepsilon)$, be a special solution of the modified Ricci-harmonic flow, then there exist a smooth map $\phi:(M^m,g)\to(N^n,h)$ satisfying \eqref{phi0} for a fixed $1$--form and $\omega_N$ on $N$ and a vector field $X$ on $M$, such that $(M^m,g,X,\omega)$ is a modified Ricci soliton, i.e., $(M^m,g,\phi)$ is a modified Ricci-harmonic soliton. Conversely, given a modified Ricci-harmonic soliton $(M^m,g,\phi)$  there exist a one-parameter family of diffeomorphism $\psi_t$ of $M$ and positive scalars $c(t)$ varying smoothly in $t$ such that $(g(t),\phi(t))$, $t\in[0,\varepsilon)$, with $g(t):=c(t)\psi^*_tg$ and $\phi(t):=\psi^*_t\phi$, is a special solution of modified Ricci-harmonic flow.
\end{proposition}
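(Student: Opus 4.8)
The plan is to prove both implications by differentiating the self-similar ansatz at $t=0$ in one direction and by integrating a suitable time-dependent vector field in the other, following the scheme of Lemma $2.4$ in \cite{TRF:AI} for Ricci solitons, while carrying along the extra $1$--form $\omega$ and the map $\phi$. Throughout I use the convention $\mathcal{L}_X\phi=d\phi(X)$ already employed in the proof of Proposition~\ref{exs.tFRDet}, the scale invariance and naturality of the Ricci tensor, and the diffeomorphism invariance of the tension field recorded in Lemma~\ref{inv}.

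For the forward implication, assume $(g(t),\phi(t))$ is a special solution, so $g(t)=c(t)\psi_t^*g$ and $\phi(t)=\phi\circ\psi_t$ with $\psi_0=\mathrm{id}_M$; after normalizing $c(0)=1$ I identify $g=g(0)$. I would let $X$ be the generator of $\psi_t$ at the origin, $X_p=\frac{\partial}{\partial t}\big|_{t=0}\psi_t(p)$, and set $\lambda=-\tfrac12 c'(0)$ and $\omega=\phi^*\omega_N$. Differentiating $g(t)=c(t)\psi_t^*g$ at $t=0$ gives $\frac{\partial}{\partial t}\big|_{t=0}g(t)=c'(0)g+\mathcal{L}_Xg$, and equating this with the value $-2Ric+2\theta\,\omega\otimes\omega$ furnished by the first flow equation yields exactly the modified Ricci soliton equation~\eqref{eq.tSR}. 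The condition $\phi^*\omega_N=\omega$ holds by definition, while differentiating $\phi(t)=\phi\circ\psi_t$ at $t=0$ gives $\frac{\partial}{\partial t}\big|_{t=0}\phi(t)=d\phi(X)=\mathcal{L}_X\phi$, which compared with the map flow equation at $t=0$ produces $\Delta_{g,h}\phi=\mathcal{L}_X\phi$. Thus $(M^m,g,\phi)$ is a modified Ricci-harmonic soliton.

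For the converse, starting from a modified Ricci-harmonic soliton $(M^m,g,\phi)$ with data $(X,\omega,\lambda,\theta)$, I would put $c(t)=1-2\lambda t$ (positive for small $t$), let $\psi_t$ be the flow of the time-dependent field $Y(t)=\tfrac{1}{c(t)}X$ with $\psi_0=\mathrm{id}_M$, and define $g(t)=c(t)\psi_t^*g$, $\phi(t)=\phi\circ\psi_t$, $\omega(t)=\phi(t)^*\omega_N=\psi_t^*\omega$. Using $\frac{\partial}{\partial t}\psi_t^*g=\psi_t^*\mathcal{L}_{Y(t)}g$, the identity $c'(t)=-2\lambda$, and the soliton equation in the form $\mathcal{L}_Xg=2\lambda g+2\theta\,\omega\otimes\omega-2Ric$, the two $\lambda$--terms cancel and one obtains $\frac{\partial}{\partial t}g(t)=2\theta\,\psi_t^*(\omega\otimes\omega)-2\psi_t^*(Ric)$; the scale invariance and naturality of the Ricci tensor, $Ric_{c\psi_t^*g}=\psi_t^*Ric$, together with $\psi_t^*(\omega\otimes\omega)=\omega(t)\otimes\omega(t)$, turn this into the first flow equation.

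The main obstacle is verifying the map heat equation in the converse, since it mixes two distinct invariances of the tension field. Differentiating $\phi(t)=\phi\circ\psi_t$ gives $\frac{\partial}{\partial t}\phi(t)|_p=\tfrac{1}{c(t)}(d\phi(X))|_{\psi_t(p)}=\tfrac{1}{c(t)}(\Delta_{g,h}\phi)|_{\psi_t(p)}$ after invoking $\mathcal{L}_X\phi=\Delta_{g,h}\phi$. To match this with $\Delta_{g(t),h}\phi(t)$ I would combine the behaviour of the tension field under a constant rescaling of the domain metric, namely $\Delta_{cg,h}\phi=\tfrac1c\Delta_{g,h}\phi$ (immediate from the coordinate expression, as a constant scaling leaves the Christoffel symbols of $g$ unchanged and only rescales $g^{ij}$), with the diffeomorphism invariance of Lemma~\ref{inv}, which gives $\Delta_{\psi_t^*g,h}(\phi\circ\psi_t)|_p=(\Delta_{g,h}\phi)|_{\psi_t(p)}$. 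Together these produce $\Delta_{g(t),h}\phi(t)|_p=\tfrac{1}{c(t)}(\Delta_{g,h}\phi)|_{\psi_t(p)}$, which coincides with the time derivative above and completes the proof that $(g(t),\phi(t))$ is a special solution.
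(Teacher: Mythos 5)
Your proposal is correct and follows essentially the same route as the paper: differentiate the self-similar ansatz at $t=0$ with $\lambda=-\tfrac12 c'(0)$, $X=Y_0$, $\omega=\phi^*\omega_N$ for the forward direction, and integrate $Y_t=X/c(t)$ with $c(t)=1-2\lambda t$ for the converse, using naturality of the Ricci tensor and Lemma~\ref{inv}. Your only addition is to make explicit the scaling identity $\Delta_{cg,h}\phi=\tfrac1c\Delta_{g,h}\phi$, which the paper uses implicitly in its final equality $\tfrac{1}{c(t)}\psi_t^*(\Delta_{g,h}\phi)=\Delta_{g(t),h}\phi(t)$; this is a welcome clarification but not a different argument.
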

\begin{proof}
Initially suppose that $(g(t),\phi(t))$, $t\in[0,\varepsilon)$, is a special solution of modified Ricci-harmonic flow. Since $\psi_t$ is a family of diffeomorphisms of $M$ satisfying $\psi_0=\mathrm{id}_M$ and $c(t)$ are positive scalars varying smoothly in $t$, we may assume without loss generality that $c(0)=1$. Define $\omega$ on $M$ by $\omega:=\phi^*\omega_N$ for fixed $1$--form $\omega_N$ on $N$. So, by definition of $\omega(t)$ we obtain
\begin{equation*}
\omega(t)=\phi(t)^*\omega_N=(\phi\circ\psi_t)^*\omega_N=\psi_t^*\phi^*\omega_N=\psi_t^*\omega,
\end{equation*}
thus
\begin{equation*}
-2Rc_{g}+2\theta\omega\otimes\omega=\dfrac{\partial}{\partial t}g(t)\Big|_{t=0}=c'(0)g+\mathcal{L}_{Y_0}g,
\end{equation*}
where $Y_t$ is the family of vector fields generating by the diffeomorphisms $\psi_t$. Thus $(M,g,X,\omega)$ is a modified Ricci soliton with $\lambda=-\dfrac{c'(0)}{2}$ and $X=Y_0$. Moreover,
\begin{align*}
\Delta_{g,h}\phi  &= \Delta_{g(t),h}\phi(t)\Big|_{t=0}
=\dfrac{\partial}{\partial t}\phi(t)\Big|_{t=0}\\
&=\dfrac{\partial}{\partial t}\psi^*_t\phi\Big|_{t=0}
=\psi^*_t\big(\mathcal{L}_{Y_t}\phi\big)\Big|_{t=0}=\mathcal{L}_{X}\phi.
\end{align*}
Hence, $(M,g,\phi)$ is a modified Ricci-harmonic soliton.
	
Conversely, suppose that $(M,g,\phi)$ is a modified Ricci-harmonic soliton. Define $c(t)=1-2\lambda t,$ and a one-parameter family of vector fields $Y_t$ on $M$ by $Y_t=\dfrac{X}{c(t)}$.
	
Let $\psi_t$ the diffeomorphisms generating by the family $Y_t$, i.e., $Y_t|_{\psi_t(p)}=\dfrac{\partial}{\partial t}\psi_t(p)$, where $p\in M$ and $\psi_0=\mathrm{id}_{M}$. So we can define the one-parameter families of Riemannian metrics on $M$ and smooth maps from $M$ to $N$ by
\begin{equation*}
g(t):=c(t)\psi_t^*g \ \ \mbox{and}\ \ \phi(t):=\psi_t^*\phi,
\end{equation*}
respectively. Since $\phi^*\omega_N=\omega$ one has
\begin{equation*}
\psi^*_t\omega=\psi^*_t(\phi^*\omega_N)=(\phi\circ\psi_t)^*\omega_N=\phi(t)^*\omega_N,
\end{equation*}
so we define $\omega(t):=\phi(t)^*\omega_N$. Hence we have that $(g(t),\phi(t))$, $t\in[0,\varepsilon)$, is a special solution of modified Ricci-harmonic flow, because
\begin{align*}
\dfrac{\partial}{\partial t}g(t)
&=\psi^*_t(-2\lambda g+\mathcal{L}_Xg)\\
&=\psi^*_t(-2Rc_{g}+2\theta\omega\otimes\omega)\\
&=-2Rc_{g(t)}+2\theta\omega(t)\otimes\omega(t),
\end{align*}
as well as
\begin{align*}
\dfrac{\partial}{\partial t}\phi(t)&=\psi^*_t\big(\mathcal{L}_{Y_t}\phi\big)
=\dfrac{1}{c(t)}\psi^*_t\big(\mathcal{L}_X\phi\big)\\
&=\dfrac{1}{c(t)}\psi^*_t\big(\Delta_{g,h}\phi\big)
=\Delta_{g(t),h}\phi(t).
\end{align*}
\end{proof}

\section{A modified Ricci-harmonic soliton}\label{HR-example}

In this section we take $m\geqslant2,$ $n\geqslant1$ integers. Let $x=(x_{1},\ldots,x_{m})\in\mathbb{R}^m$ and let $y=(y_{1},\ldots,y_{n})\in\mathbb{R}^n$ be standard coordinate systems. 

In this section, let $(M^m,g)$ be warped product $\mathbb{R}\times_{e^{x_1}}\mathbb{R}^{m-1}$ and let  $(N^n,h)$ be the punctured Euclidean space $(\mathbb{R}^n-\{\vec{a}\},g_{\mathbb{R}^n}),$ where $\vec{a}=(a_1,\ldots,a_n)$ with $a_{\gamma}>0$ for $\gamma=1,\ldots,n.$ 

For $\lambda<1-m<0$ by taking $\theta=\dfrac{1}{1-\lambda-m}>0,$ define
\begin{itemize}
\item $X\in\mathfrak{X}(M):$ 
\begin{align*}
X_{x} = \big(m-\lambda-1,2\lambda x_2,\ldots,2\lambda x_m\big);    
\end{align*}
\item $\omega\in\Omega^1(M):$ 
\begin{align*}
\omega = \dfrac{1}{\theta}dx_1;    
\end{align*}
\item $\omega_N\in\Omega^1(N):$
\begin{align*}
\omega_N=\dfrac{-1}{n\theta \lambda}\sum_{\gamma=1}^n\dfrac{dy_\gamma}{y_\gamma-a_\gamma};
\end{align*}
\item $\phi\in C^{\infty}(M;N):$
\begin{align*}
\phi(x_1,\ldots,x_m) = e^{-\lambda x_1}\ \vec{b}+\vec{a}, \end{align*}
where $\vec{b}=(b_1,\ldots,b_n)$ with $b_{\gamma}>0$ for $\gamma=1,\ldots,n.$
\end{itemize}
 
Hence, $(M^m,g,\phi)$ is modified Ricci-harmonic soliton, i.e., holds
\begin{align*}
\begin{cases}
\phi^*\omega_N=\omega,\\
Rc_{g}+\dfrac{1}{2}\mathcal{L}_Xg=\lambda g + \theta\omega\otimes\omega,\\
\Delta_{g,h}\phi = \mathcal{L}_X\phi.
\end{cases}    
\end{align*}

In fact,
\begin{itemize}
\item $\phi^*\omega_N = \omega:$ 
\begin{align*}
\phi^*\omega_N &= \dfrac{-1}{n\theta \lambda}\sum_{\gamma=1}^n\dfrac{d\phi_\gamma}{\phi_\gamma-a_\gamma} = \dfrac{-1}{n\theta \lambda}\sum_{\gamma=1}^n \dfrac{(-\lambda)(\phi_\gamma-a_\gamma)}{\phi_\gamma-a_\gamma}dx_1\\
&=\dfrac{1}{n\theta}\sum_{\gamma=1}^n dx_1 = \dfrac{1}{n\theta} n dx_1 = \dfrac{1}{\theta} dx_1 = \omega;
\end{align*}

\item $Rc_{g} + \dfrac{1}{2}\mathcal{L}_Xg = \lambda g +\theta \omega\otimes\omega:$
\begin{align*}
Rc_{g} +\dfrac{1}{2}\mathcal{L}_Xg &= -(m-1)dx_1^2 -(m-1)e^{2x_1}\sum_{i=2}^m dx_i^2\\
& \ \ \ +(m-\lambda-1)e^{2x_1}\sum_{i=2}^m dx_i^2\\
&= -(m-1)dx_1^2 + \lambda e^{2x_1}\sum_{i=2}^m dx_i^2 +\lambda dx_1^2 -\lambda dx_1^2\\
&= \lambda dx_1^2 + \lambda e^{2x_1}\sum_{i=2}^m dx_i^2 - (m+\lambda-1)dx_1^2\\
& = \lambda dx_1^2 + \lambda e^{2x_1}\sum_{i=2}^m dx_i^2 + \dfrac{1}{\theta}dx_1^2 = \lambda g + \theta\omega\otimes\omega;
\end{align*}

\item $\Delta_{g,h}\phi=\mathcal{L}_X\phi:$
\begin{align*}
\Delta_{g,h}\phi &= \Big(\Delta_g\phi_1,\ldots,\Delta_g\phi_n\Big)\\ 
&= \left(\dfrac{d^2\phi_1}{d x_1^2}+(m-1)\dfrac{d \phi_1}{d x_1},\ldots,\dfrac{d^2\phi_n}{d x_1^2}+(m-1)\dfrac{d \phi_n}{dx_1}\right)\\
&= (m-\lambda-1)\left(\dfrac{d\phi_1}{d x_1},\ldots,\dfrac{d\phi_n}{d x_1}\right)= \Big(g(\nabla\phi_1,X),\ldots,g(\nabla\phi_n,X)\Big)\\
&= \Big(d\phi_1(X),\ldots,d\phi_n(X)\Big) = d\phi(X) = \mathcal{L}_X\phi.
\end{align*}
\end{itemize}

Finally, we will get a special solution $(g(t),\phi(t)),$ $t\in(1/2\lambda,+\infty),$ of modified Ricci-harmonic flow. To do this, we must find a one-parameter family of diffeomorphism $\psi_t=(\psi_t^1,\ldots,\psi_t^m):\mathbb{R}^m\to\mathbb{R}^m,$ that is the solution to the following PDE
\begin{align*}
\begin{cases}
\dfrac{\partial}{\partial t}\psi_t(x) = \dfrac{1}{1-2\lambda t}X_{\psi_t(x)}\\
\psi_0(x)=x,
\end{cases}
\end{align*}
which is equivalent to
\begin{align*}
\begin{cases}
\dfrac{\partial}{\partial t}\psi_t^1(x) = \dfrac{m-\lambda-1}{1-2\lambda t},\\
\psi_0^1(x)=x_1, 
\end{cases}
\ \ \mbox{and} \ \ \ 
\begin{cases}
\dfrac{\partial}{\partial t}\psi_t^i(x) = \dfrac{2\lambda x_i}{1-2\lambda t},\\
\psi_0^i(x)=x_i, \ \ \ i=2,\ldots,m,
\end{cases}
\end{align*}
whose solutions are
\begin{align*}
\psi_t^1(x)=\ln(1-2\lambda t)^{(\lambda-m+1)/2\lambda} + x_1
\end{align*}
and
\begin{align*}
\psi_t^i(x) = \big(1-\ln(1-2\lambda t)\big)x_i, \ \ \ i=2,\ldots,m.   
\end{align*}

Therefore, for $t\in(1/2\lambda,+\infty),$ 
\begin{align*}
g(t)&=c(t)dx_1^2 + c(t)^{(2\lambda-m+1)/\lambda} \cdot \big(1-\ln(1-2\lambda t)\big)^2 e^{2x_1}\sum\limits_{i=2}^m dx_i^2
\end{align*}
and
\begin{align*}
\phi(t)(x)=c(t)^{(m-\lambda-1)/2} e^{-\lambda x_1}\vec{b}+\vec{a},
\end{align*}
where $c(t)=1-2\lambda t,$ is a special solution of modified Ricci-harmonic flow.   

\begin{remark}
Recall that the gradienticity is a Riemannian metric condition. In fact, for the standard metric of $\mathbb{R}^{m},$
\begin{align*}
g_{\mathbb{R}^m} = \sum\limits_{i=1}^{m}dx_{i}^{2},   
\end{align*}
the vector field 
\begin{align*}
X_{x}=(m-\lambda-1,2\lambda x_{2},\ldots,2\lambda x_{m}), 
\end{align*}
is the gradient of smooth function $u:\mathbb{R}^{m}\to\mathbb{R}$ given by
\begin{align*}
u(x_{1},\ldots,x_{m}) = (m-\lambda-1)x_1+\lambda\|(x_{2},\ldots,x_{m})\|^{2} + \kappa,    
\end{align*}
for some constant $\kappa.$    

On the other hand, 
\begin{align*}
X^{\flat} &= g(X,\cdot) = dx_1(X)dx_1 + e^{2x_1}\sum\limits_{i=2}^{m}dx_{i}(X)dx_{i}\\
&= X_{1}dx_1 + e^{2x_1}\sum\limits_{i=2}^{m} X_{i}dx_{i} = (m-\lambda-1)dx_1 + 2\lambda e^{2x_1}\sum\limits_{i=2}^{m}x_{i}dx_{i},
\end{align*}
which implies, by exterior derivative
\begin{align*}
dX^{\flat} = 4\lambda e^{2x_1}\sum\limits_{i=2}^{m}x_{i}dx_1\wedge dx_{i} \neq 0,   
\end{align*}
i.e., $X$ is nongradient smooth vector field in the Riemannian metric $g.$
\end{remark}


\vspace{0.3cm}
\textbf{Acknowledgements:} The author would like to express his gratitude to the anonymous reviewer for his careful reading and for his corrections and constructive suggestions to the text. Thanks to your work, we were able to improve this article.

\vspace{0.3cm}
\textbf{Data availability:} No data was used for the research described in the article.

\vspace{0.8cm}
{\huge\textbf{Declarations}}

\vspace{0.6cm}
\textbf{Conflict of interest:} The author declares that he doesn't have any financial and non-financial conflict of interests.

\end{document}